\def\cref@override@label@type#1\@nil#2{%
  [#2][2147483647][\cref@result]#1%
}
\crefname{lemma}{Lemma}{Lemmas}
\Crefname{lemma}{Lemma}{Lemmas}
\crefname{theorem}{Theorem}{Theorems}
\Crefname{theorem}{Theorem}{Theorems}
\crefname{proof}{Proof}{Proofs}
\Crefname{proof}{Proof}{Proofs}
\crefname{algorithm}{Algorithm}{Algorithms}
\Crefname{algorithm}{Algorithm}{Algorithms}
\crefname{hypothesis}{Hypothesis}{Hypotheses}
\title{Fault Oblivious Eigenvalue Solver\thanks{
\funding{This work was funded by the Department of Energy(DOE DE-SC0023162 Sparsitute) and the NSF (IIS-2007481).}}}
\author{Jayanta Mukherjee\thanks{Department of Computer Science, Purdue University, West Lafayette, IN 
  (\email{jmukher@purdue.edu}).}
\and Xuejiao Kang\thanks{Facebook 
  (\email{xue@meta.com})}
\and David F. Gleich\thanks{Department of Computer Science, Purdue University, West Lafayette, IN 
  (\email{dgleich@purdue.edu})}
\and Ahmed Sameh\thanks{Department of Computer Science, Purdue University, West Lafayette, IN 
  (\email{sameh@purdue.edu})}
\and Ananth Grama\thanks{Department of Computer Science, Purdue University, West Lafayette, IN 
  (\email{ayg@cs.purdue.edu})}}
\newcommand{\tA}{\tilde{A}}
\newcommand{\tB}{\tilde{B}}
\newcommand{\tx}{\tilde{x}}
\begin{document}

\maketitle

\begin{abstract}
Eigenvalue problems serve as fundamental substrates for applications in large-scale scientific simulations and machine learning, often requiring computation on massively parallel platforms. As these platforms scale to hundreds of thousands of cores, hardware failures become a significant challenge to reliability and efficiency. In this paper, we propose and analyze a novel fault-tolerant eigenvalue solver based on erasure-coded computations -- a technique that enhances resilience by augmenting the system with redundant data \textbf{a priori}.  This transformation reformulates the original eigenvalue problem as a generalized eigenvalue problem, enabling fault-oblivious computation while preserving numerical stability and convergence properties. We formulate the augmentation scheme, establish the necessary conditions for the encoded blocks, and prove the relationship between the original and transformed problems. We implement an erasure-coded TraceMin eigensolver and demonstrate its effectiveness in extracting eigenvalues in the presence of faults. Our experimental results show that the proposed solver incurs minimal computational overhead, maintains robust convergence, and scales efficiently with the number of faults, making it a practical solution for resilient eigenvalue computations in large-scale systems.
\end{abstract}

\begin{keywords}
Fault tolerance, Eigenvalue computation, Eigensolver, TraceMin
\end{keywords}

\begin{MSCcodes}
15A18, 65F15, 93B60
\end{MSCcodes}

\section{Introduction}
Eigenvalue problems are computationally intensive and arise in various domains, often requiring solutions on scalable parallel and distributed platforms. The high complexity and massive scale of such platforms makes fault tolerance a critical consideration. Traditional parallel computations typically rely on checkpoint-restart mechanisms for fault tolerance. However, these techniques present two major challenges: (i) they require consistent checkpoints, which may incur significant overhead due to rollback -- especially in scalable parallel programs that aim to minimize global synchronization; and (ii) they demand substantial I/O capacity and bandwidth to store checkpoints in persistent storage or sufficient interconnect bandwidth for in-memory checkpoints. 

An alternative approach to fault tolerance involves detecting and mitigating failures using active replicas in conjunction with a consensus procedure. Active replicas are typically used in real-time systems, where worst-case execution times must be guaranteed, and rollback/ replay schemes may violate such guarantees. However, active replicas have high resource overhead, as each computation must be executed by $s+1$ replicas to tolerate $s$ faults.

Erasure coding is frequently used in storage systems to provide efficient and scalable fault tolerance by adding appropriately coded redundancies to overcome data erasures. These codes can be conceptualized as multiplying a data vector of size $n$ by a coding matrix consisting of $n$ columns and $m > n$ rows. If any $n$ of the $m$ rows of the coding matrix are guaranteed to be linearly independent, the vector resulting from this matrix-vector product contains sufficient redundancies to tolerate up to $m-n$ erasures.
Specifically, in the event of up to $m-n$ erasures, the remaining $n$ elements can be used with the corresponding $n \times n$ non-singular matrix corresponding to the non-erased rows of the coding matrix to recover the original $n$ data items. Building on this concept of erasure-coded storage, in prior work, we introduced the notion of erasure-coded computation {\em} for solving linear systems. In this work, the input problem is augmented with suitably coded blocks, the augmented problem instance is solved on a faulty parallel platform in a fault-oblivious manner, and the solution is recovered from the results on the non-faulty processors using an inexpensive procedure~\cite{kang_linear_erasure,joyceDynamic,  Zhu2014ErasureCF}.



In this paper, we present a novel formulation of erasure-coded computations for solving eigenvalue problems. In contrast to solving linear systems, naively augmenting the input matrix with row and column blocks alters the spectrum, with no known inexpensive methods of recovering the original eigenvalues.

We focus here on faults that cause erasure (or deletion) of data, along with fail-stop failure of the processors on which the corresponding part of the computation may execute. We present a novel erasure-coded computation scheme for fault-tolerant solution of eigenvalue problems, $Ax = \lambda x$, for given matrix $A$. 
Unlike linear systems, naively adding a coding block to a given matrix $A$ changes its eigenvalues, and there are no known computationally inexpensive ways of recovering the original eigenvalues from these perturbed eigenvalues. To address this, we transform the original eigenvalue problem to an equivalent (in terms of eigenvalues) generalized eigenvalue problem $\Tilde{A}\Tilde{x} = \lambda \Tilde{B} \Tilde{x}$, where $\tilde{A}$ is an augmented form of matrix $A$ and $\tilde{B}$ is an encoded identity matrix. 
The resulting augmented problem can be solved using most off-the-shelf eigensolvers in a fault-oblivious manner; i.e., in the event of a fail-stop failure, the remaining processors simply proceed with their computation oblivious to the faults.

We present detailed proofs establishing the equivalence between the eigenvalues of the original problem and those of a reformulated generalized eigenvalue problem (Theorem-\ref{th:eigenvalue_equivalence}). We analyze the impact of faults on the generalized problem and provide methods to recover eigenvalues in the presence of such faults. Although our primary fault model assumes fail-stop failures - analogous to erasures in storage - other fault types (e.g., transient or soft faults) can be handled similarly using predicates to detect and isolate faulty states.

We solve the reformulated problem using TraceMin \cite{samehParallel, 2000sameh, 1982Sameh} and show that our approach achieves a convergence behavior comparable to the fault-free case, even under random erasures (randomly selected row-column removals).

Our experiments quantify the impact of different fault models on convergence, the overhead introduced by our erasure-coded eigensolver, the benefits of optimizations, and sensitivity to various parameters, including approximation levels in adaptive code construction. These results demonstrate the efficacy of our approach in solving eigenvalue problems in fault-prone computing environments.

In summary, our contributions are as follows.
\begin{enumerate}[label=(\roman*)] 
\item A novel reformulation of the eigenvalue problem as an equivalent generalized eigenvalue problem with augmented matrices, enabling fault-oblivious computation. 
\item Low-overhead erasure-coding schemes for fault-tolerant eigenvalue computations. 
\item Application of erasure coding to both the Power Method and TraceMin, demonstrating the generality of our approach. 
\item Performance comparison with checkpoint-restart techniques, highlighting the advantages of our erasure-coded solver. 
\item Analysis of both single- and multifault scenarios, demonstrating robustness across diverse fault conditions. 
\end{enumerate}
\section{Related Research}\label{sec:related}
Accurate computation of eigenvalues is an essential part of ML applications operating in hardware environments ranging from embedded devices in harsh environments to data-center scale solvers. 
Fault tolerance techniques in these environments can be classified into two broad categories: system-supported and algorithm-based. System-supported methods include checkpoint-restart~\cite{dongarra_check}, active replicas \cite{activereplica_ipdps}, and deterministic replay~\cite{Deterministic_replay07}. Checkpoint-restart techniques periodically save the application state into persistent storage (disks or replicated in-memory). This requires the identification of consistent checkpoints and the capacity for persistent storage in terms of space and bandwidth. Active replicas execute computations on multiple processors -- these replicas are monitored for potential faults, and a consensus protocol identifies fault-free executions.  
Algorithm-based fault tolerance (ABFT) methods modify the base algorithm to embed redundant computations to render the overall computation resilient to faults \cite{BOSILCA2009410, Bridges2012FaulttolerantLS, chen2009, chen_dongarra2008, Fault_chen05, huang1984, LUK1988172}. While ABFT methods often have advantages over system-supported methods in resource overheads, they must be specifically designed for each algorithm, leverage specific aspects of the algorithm and fault characteristics, and typically require intricate correctness proofs. 
Our method can be viewed as the first method for fault-tolerant eigenvalue computations in the broad class of ABFT methods. It leverages results from coding theory (sparse codes), linear algebra (augmented problem formulation, correctness proofs), randomized techniques (leverage score sampling), and efficient solvers (conditioning, convergence), to deliver a novel high-performance fault-tolerant eigensolver.
\section{Erasure Coded Eigenvalue Solver}
\label{sec:alg}


Our proposed solution adds redundant rows and columns to the matrix to render it tolerant to faults with any solver. In contrast to linear system solvers, adding a row (or column) to a matrix, even if it is in the row-subspace of the matrix, changes its eigenvalues. For this reason, a key challenge for us is the reformulation of the eigenvalue problem so that the addition of a coding block still allows for inexpensive recovery of the original eigenvalues. In this section, we present a novel reformulation of the eigenvalue problem, along with recovery algorithms.

\subsection{Formulating an Erasure Coded Eigensolver}\label{sec:formulation}

A standard symmetric eigenvalue problem can be written as:
\begin{equation}\label{eq:eig}
    A x = \lambda x,
\end{equation}
where $A \in \mathbb{R}^{n\times n}$ is a symmetric matrix, $\lambda$ is an eigenvalue, and $x$ is the corresponding eigenvector. A generalized eigenvalue problem is
\begin{equation}\label{eq:geneig}
    A x = \lambda B x,
  \end{equation}
 which reduces to the original problem when $B$ is the identity matrix ($I_n$) of size $n\times n$. 

Let $E$ be an $n \times k$ matrix we call the \emph{coding matrix}. In our design, the coding matrix $E$ should have Kruskal row rank of $k$ \cite{Kruskal1977ThreewayAR} to ensure tolerance of up to $k$ faults that occur anywhere in the system. We derive the \textit{recovery equation} for our erasure coding as:
\begin{equation}\label{eq:recovery}
    x^{*} = x + E r .   
\end{equation}
Here, $x$ is the eigenvector of the original eigenvalue problem \ref{eq:eig} of dimension $n$, and $r$ is the redundant part (related to fault-tolerance) of dimension $k$
We can substitute the recovery equation \ref{eq:recovery} into original eigenvalue system \ref{eq:eig} to get
\begin{equation}\label{eq:rec1}
    A(x + E r) = \lambda (x + Er)
\end{equation}

Recall that, Kruskal row rank $k$ of a matrix implies that any subset of $k$ rows of the matrix is guaranteed to be linearly independent. Now, as we want some redundancy, we add another set of linear constraints.
\begin{equation}\label{eq:redun}
    E^{T}[A x^{*} = \lambda x^{*}]
\end{equation}
Putting  \ref{eq:rec1} and \ref{eq:redun} together we have,
\begin{equation}\label{eq:group}
    \left\{ \begin{aligned} 
  A (x + E r) &= \lambda (x + Er)\\
  E^{T} A (x + E r) &= \lambda E^{T} (x + E r)
\end{aligned} \right.
\end{equation}
After grouping the terms of the above \ref{eq:group}, we arrive at the augmented eigenvalue problem:
\begin{equation}\label{eq:augmented}
    \begin{bmatrix} A & AE \\ E^{T}A & E^{T}AE \end{bmatrix} \begin{bmatrix} x \\ r \end{bmatrix} = \lambda \begin{bmatrix} I & E \\ E^{T} & E^{T}E \end{bmatrix} \begin{bmatrix} x \\ r \end{bmatrix}
\end{equation}

We more compactly write this as the following generalized eigenvalue problem:
 \begin{equation}\label{eq:generasureeig}
    \Tilde{A} \Tilde{x} = \lambda \Tilde{B} \Tilde{x},
  \end{equation}
 where, $\Tilde{A}$, $\Tilde{B}$ are the augmented matrices
  \begin{equation}
  \label{eq:augmented2}
  \begin{aligned}
  \Tilde{A} = \begin{bmatrix} A & AE \\ E^{T}A & E^{T}AE \end{bmatrix}, \quad  
  \Tilde{B} =
  \begin{bmatrix} I & E \\ E^{T} & E^{T}E \end{bmatrix},
  \quad
  \Tilde{x} = \begin{bmatrix} x \\ r \end{bmatrix}
  \end{aligned}
  \end{equation}
and $\Tilde{x}$ is the eigenvector of the augmented system.


This generalized eigenvalue problem is singular. In this case, this is because there exists a vector $\tilde{x}$ such that $\tA \tilde{x} = 0$ and $\tB \tilde{x} = 0$ for any vector $\tilde{x}$ in the joint null-space of $\tA$ and $\tB$. 

\begin{lemma}[Null Space in Augmented Generalized Eigenvalue System]
\label{th:nullspace}
The matrices $\Tilde{A}$ and $\Tilde{B}$ from \eqref{eq:augmented2} have a joint null space $\begin{bmatrix}
    E \\ -I_{k}
\end{bmatrix}$
\end{lemma}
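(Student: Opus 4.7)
The plan is to verify that each of the $k$ columns of $M = \begin{bmatrix} E \\ -I_{k} \end{bmatrix}$ is annihilated by both $\tilde{A}$ and $\tilde{B}$, which is exactly what it means for the column span of $M$ to sit inside the joint null space.

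The cleanest route I would take is to first expose a common right factor in $\tilde{A}$ and $\tilde{B}$. From the block definitions in \eqref{eq:augmented2}, both matrices admit the outer-product factorizations
\[
\tilde{A} = \begin{bmatrix} I \\ E^{T} \end{bmatrix} A \begin{bmatrix} I & E \end{bmatrix}, \qquad \tilde{B} = \begin{bmatrix} I \\ E^{T} \end{bmatrix} \begin{bmatrix} I & E \end{bmatrix}.
\]
Both share the rightmost factor $\begin{bmatrix} I & E \end{bmatrix}$, so the entire lemma reduces to the single identity $\begin{bmatrix} I & E \end{bmatrix} M = E - E = 0$, from which $\tilde{A} M = 0$ and $\tilde{B} M = 0$ follow at once.

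As a direct sanity check, one may alternatively evaluate each of the four blocks of $\tilde{A} M$ and $\tilde{B} M$ by hand: the top and bottom blocks of $\tilde{A} M$ are $AE - AE$ and $E^{T} A E - E^{T} A E$ respectively, while for $\tilde{B} M$ they are $E - E$ and $E^{T} E - E^{T} E$. All four cancellations are forced by the shared right factor, which is why the factorized viewpoint is preferable.

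The hard part, such as it is, is purely notational: one must read the lemma as asserting that the column span of the $(n{+}k) \times k$ matrix $M$ lies in the joint null space, rather than as a claim about a single vector. As a natural strengthening not required by the statement, the factored forms make it easy to see that the joint null space is in fact \emph{equal} to the span of $M$: $\operatorname{rank}(\tilde{B}) = \operatorname{rank} \begin{bmatrix} I & E \end{bmatrix} = n$, so $\dim \ker(\tilde{B}) = k$, and the $k$ linearly independent columns of $M$ already exhaust this kernel; an analogous statement for $\tilde{A}$ holds whenever $A$ is nonsingular.
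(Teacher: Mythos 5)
Your proof is correct and takes essentially the same route as the paper's: both verify that the columns of $\left[\begin{smallmatrix} E \\ -I_k \end{smallmatrix}\right]$ are annihilated by $\tilde{A}$ and $\tilde{B}$, and both use $\operatorname{rank}(\tilde{B}) = n$ to show the joint null space has dimension at most $k$, so this span is the whole thing. Your observation that $\tilde{A} = \left[\begin{smallmatrix} I \\ E^{T} \end{smallmatrix}\right] A \left[\begin{smallmatrix} I & E \end{smallmatrix}\right]$ and $\tilde{B} = \left[\begin{smallmatrix} I \\ E^{T} \end{smallmatrix}\right] \left[\begin{smallmatrix} I & E \end{smallmatrix}\right]$ share the right factor $\left[\begin{smallmatrix} I & E \end{smallmatrix}\right]$ is a cleaner way to see the four block cancellations the paper leaves implicit, but it is the same underlying argument rather than a different one.
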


\begin{proof}
    Recall $\tA$ and $\tB$ are $(n+k) \times (n+k)$. Note that $\tB$ has rank $n$ since the last $k$ rows are a linear combination of the previous $n$, and the first $n$ rows have an identity block. Therefore, the joint null space must have rank at most $k$. The matrix $\begin{bmatrix}
    E \\- I_{k}
\end{bmatrix}$ has rank $k$ and 
\begin{equation}\label{eq:null}
    \begin{aligned}
        \Tilde{A}\begin{bmatrix}
    E \\ -I_{k}
\end{bmatrix} = 0 , \quad&  
        \Tilde{B}\begin{bmatrix}
    E \\ -I_{k}
\end{bmatrix} = 0.
    \end{aligned}
\end{equation}
\end{proof}

To make the structure inside the singular pencil clear, we next show that the pencil $(\tA, \tB)$ is strictly equivalent to a simple case related to the eigenvalues of $A$ alone.

\begin{theorem}[Matrix Pencil Equivalence]
Let $\tA, \tB$ be from \eqref{eq:augmented}. Then the pencil $\tA - \lambda \tB$ is strictly equivalent to the pencil 
\[ \begin{bmatrix} 0 & 0 \\ 0 & A (I + E E^T) \end{bmatrix} - \lambda \begin{bmatrix} 0 & 0 \\ 0 & (I + E E^T) \end{bmatrix}. \]  
Since the matrix $(I + EE^T)$ is non-singular for any $E$, a subset of eigenvalues and eigenvectors of the generalized eigenvalues of $(\tA, \tB)$ will be related to the eigenvalues of $A$. 
\end{theorem}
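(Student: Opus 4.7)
The plan is to exhibit explicit nonsingular matrices $P$ and $Q$ that realize the strict equivalence. The key structural observation is a common rectangular factorization of the two augmented matrices: setting $V := [\,I_n\ E\,] \in \mathbb{R}^{n\times(n+k)}$, one reads directly from \eqref{eq:augmented2} that $\tA = V^T A V$ and $\tB = V^T V$. Moreover, the joint null space identified in Lemma-\ref{th:nullspace} is precisely $\ker V$ viewed column-wise, which is exactly the source of the singular part of the pencil.

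The strategy is then to choose $Q$ so that $VQ = [\,0_{n\times k}\ \ I+EE^T\,]$---by placing a basis of $\ker V$ in the first $k$ columns of $Q$ and using the columns of $V^T$ in the remaining $n$ columns (since $VV^T = I+EE^T$)---and symmetrically to choose $P$ so that $PV^T = \begin{bmatrix} 0_{k\times n} \\ I_n \end{bmatrix}$. A natural candidate realizing both is
\[
Q = \begin{bmatrix} E & I_n \\ -I_k & E^T \end{bmatrix}, \qquad P = \begin{bmatrix} E^T & -I_k \\ I_n & 0 \end{bmatrix}.
\]
A direct block multiplication then gives $P\tA Q = (PV^T)\,A\,(VQ)$, which collapses to a matrix with $A(I+EE^T)$ in the lower-right $n\times n$ block and zeros elsewhere; replacing $A$ by $I$ in the same computation yields $I+EE^T$ in that block for $P\tB Q$, which matches the target pencil exactly.

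What remains is to verify that $P$ and $Q$ are nonsingular. For $P$, the $I_n$ block in the lower-left together with the $-I_k$ block in the upper-right makes a direct column-independence argument immediate. For $Q$, solving $Qz = 0$ reduces via the block structure to $(I + E^T E)c = 0$ on the $\ker V$ coordinates; this forces $c = 0$ because $I + E^T E$ is symmetric positive definite for every $E$. The closing assertion about eigenvalues then follows at once from the factorization $A(I+EE^T) - \lambda(I+EE^T) = (A - \lambda I)(I+EE^T)$ together with the non-singularity of $I+EE^T$, which identifies the finite generalized eigenvalues of the reduced pencil with the eigenvalues of $A$. I do not anticipate a real obstacle here: once one recognizes the factorization $\tA = V^T A V$, $\tB = V^T V$, the proof reduces to block-multiplication bookkeeping plus a single positive-definiteness check.
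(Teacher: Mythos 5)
Your proof is correct, and it takes a genuinely different route from the paper's. The paper picks a single invertible matrix $M = \bigl[\begin{smallmatrix} E & I_n \\ -I_k & E^T \end{smallmatrix}\bigr]$ (there is a typo in the paper's $M$: the $(2,2)$ block should read $E^T$) and conjugates, $M^{-1}(\tA - \lambda\tB)M$, which is a similarity and hence a strict equivalence; it then computes $M^{-1}$ in closed form and reduces the argument to two nontrivial push-through identities, $E^T(I+EE^T)^{-1} = (I+E^TE)^{-1}E^T$ and $(I+EE^T)^{-1} + E(I+E^TE)^{-1}E^T = I$, which the paper flags as the ``trickier steps'' and leaves to the reader. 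You instead use the general form of strict equivalence with $P \neq Q^{-1}$: your $Q$ equals the paper's $M$, but you replace $M^{-1}$ with the far simpler $P = \bigl[\begin{smallmatrix} E^T & -I_k \\ I_n & 0 \end{smallmatrix}\bigr]$. The engine of your argument is the observation that $\tA = V^T A V$ and $\tB = V^T V$ with $V = [\,I_n\ \ E\,]$, which makes both $P\tA Q = (PV^T)A(VQ)$ and $P\tB Q = (PV^T)(VQ)$ collapse immediately once you compute $VQ = [\,0\ \ I+EE^T\,]$ and $PV^T = \bigl[\begin{smallmatrix}0 \\ I_n\end{smallmatrix}\bigr]$, both of which are one-line block products requiring no identities. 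Nonsingularity of $P$ is trivial, and nonsingularity of $Q$ reduces to $I+E^TE \succ 0$, exactly as you say.

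What each approach buys: the paper's similarity transform produces a canonical-form statement in which the eigenvector correspondence is a single conjugation by $M$, at the cost of the closed-form inverse and the two identities. Your $P \neq Q^{-1}$ version is shorter and entirely self-contained, because choosing $P$ to annihilate the top block of $V^T$ directly sidesteps the identities; the slight price is that eigenvector recovery is governed by $Q$ alone rather than by conjugation, though the theorem only asserts the eigenvalue correspondence so this costs nothing here. Your reading of the joint null space as $\ker V$ (first $k$ columns of $Q$) and of the last $n$ columns as $V^T$ with $VV^T = I + EE^T$ is also a cleaner structural explanation than the paper's ``$M = [Z\ Y]$ with $Z^T Y = 0$'' remark. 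Your closing step, $A(I+EE^T) - \lambda(I+EE^T) = (A-\lambda I)(I+EE^T)$ with $I+EE^T$ nonsingular, matches the paper's similarity-transform observation and is fine.
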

\begin{proof}
Let $M = \begin{bmatrix}E & I \\ -I_k & E\end{bmatrix}$, $M$ is invertible with the last $n$ columns orthogonal to the first $k$. Then $M^{-1} (\tA - \lambda \tB) M$ is a strict equivalence transformation and 
\[ M^{-1} (\tA - \lambda \tB) M = \begin{bmatrix} 0 & 0 \\ 0 & A (I + E E^T) \end{bmatrix} - \lambda \begin{bmatrix} 0 & 0 \\ 0 & (I + E E^T) \end{bmatrix}. \]
For a quick understanding of why this form follows, note that $M = \begin{bmatrix} Z & Y\end{bmatrix}$ where $Z$ is the basis for the joint null space of $A$ and $B$ and $Z^T Y = 0$, so these are orthogonal subspaces. A straightforward elementary justification is possible through the closed form calculation of 
\[ M^{-1} = \begin{bmatrix} E^T (I + EE^T)^{-1} & -(I + E^T E)^{-1} \\ 
                    (I + EE^T)^{-1} & E(I + E^T E)^{-1} \end{bmatrix}. \] 
The two trickier steps are showing $E^T (I + E E^T)^{-1} + (I + E^T E)^{-1} E^T = 0$ (which shows up in the $2,1$ block) and  $(I + E E^T)^{-1} + E (I + E^T E)^{-1} E^T = I$ (which shows up in the $2,2$ block). It suffices to look at the computation for $\tA$ alone as the structure of $\tA$ and $\tB$ are the same with $A$ replaced by an identity block. 

For the trailing size $n$ block, we then have the generalized problem $A (I + EE^T) y = \lambda (I+EE^T) y$, which is a similarity transform of the eigenvalues of $A$ through $(I + EE^T)^{-1} A (I + EE^T)$. 
\end{proof}





Since the singular subspace of the generalized eigenvalue problem can have \emph{any} eigenvalues when treated computationally the augmented eigenvalue system (as shown in \ref{eq:generasureeig}) will have some spurious eigenvalues that correspond to the vectors in the null-space of $\begin{bmatrix} I \thickspace E \end{bmatrix}$. These can be easily detected.

\paragraph*{\textbf{Example}} We illustrate this encoding and detection of spurious eigenvalues using a simple example. Let: 
\[ A = \begin{bmatrix}
    2 & -1 & 0 & 0 \\
-1 & 2 & -1 & 0 \\
0 & -1 & 2 & -1 \\
0 & 0 & -1 & 2 \\
\end{bmatrix} 
\quad 
E = 
\begin{bmatrix} 
0.98 & 0.42 \\
0.13 & 0.39 \\
0.53 & 0.85 \\
0.87 & 0.93 \\
\end{bmatrix}
\]
The eigenvalues of $A$ are (to four digits) 
\[ \begin{array}{cccc} 
0.382 & 1.382 & 2.618 & 3.618 
\end{array} \]
One computation of the generalized eigenvalues of eigenvectors of $\Tilde{A}$ and $\Tilde{B}$ yielded: 
\[ \begin{array}{cccccc}
    0.382 & 0.6012 & 1.382 & 2.618 & 3.6031 & 3.618
\end{array}
\] 
\[ \begin{bmatrix} 
-0.1696 & 0.1728 & -0.2473 & -0.258 & -0.4414 & 0.4409 \\
-0.5019 & 0.3572 & 0.2104 & 0.1836 & 0.2757 & -0.2745 \\
-0.8198 & 0.7163 & 0.7065 & 0.3035 & 0.3842 & -0.3852 \\
-0.7187 & 0.7106 & 0.6918 & 0.3455 & 0.1653 & -0.1643 \\
-0.3552 & 0.2522 & 0.4895 & 0.8502 & 0.879 & -0.8793 \\
1.0 & -1.0 & -1.0 & -1.0 & -1.0 & 1.0 \\
\end{bmatrix} 
\]
(each column of the matrix is an eigenvector with the associated eigenvalue above it).
The two spurious eigenvalues can be detected by computing $\begin{bmatrix} I & E \end{bmatrix} r$ for each eigenvector and checking for zero (or a very small norm, more generally). For example, 
\[ \begin{bmatrix} 
0.1728 \\
0.3572 \\
0.7163 \\
0.7106 \\
\end{bmatrix}
+ E \begin{bmatrix}
    0.2522 \\ -1
\end{bmatrix} = 0
\]

We summarize as follows. Given any generalized eigenvector $\tx = \left[\begin{smallmatrix} x \\ r \end{smallmatrix}\right]$ of $(\tA, \tB)$, then compute $x + Er$ and if this is zero, then it is a spurious solution. 


In principle, the singularity gives us the fault tolerance we seek. The eigenvalue problem remains valid with up to $k$ elements of $x$ set to arbitrary values. However, the singular pencil poses a variety of computational issues that are difficult to resolve. Consequently, we use this idea for inspiration and consider ideas more closely related to those used in~\cite{joyceDynamic}. This involves 
replacing the erased row-column pairs with the corresponding coding blocks from the augmented matrices $\Tilde{A}$ and $\Tilde{B}$ when a fault occurs.

\subsection{A More Practical Fault Handling and Solution Recovery Idea}\label{sec:solrecovery}
The idea for our more practical solver is that we treat the blocks $R = AE$, $S = E^T A E$, $T = E^T E$ as \emph{redundancy} that can be used whenever it is needed by a fault. (Recall that we target symmetric $A$ so $(AE)^T = E^T A$.) Then, when fault occur, we substitute these into the system for the missing parts. 
Then we initiate an eigensolver on $A$. 
In the event of a fault, a subset of rows or columns in the matrices $A$ and $B$ are ``erased'' in that we lose access to them. Note that we consider only fail-stop failures; that is, in the event of a failure, a compute node halts both computation and communication.

After erasures, we reconstitute the matrices $A'$ and $B'$ by substituting the erased rows and columns with the corresponding rows and columns from the blocks of $E, R, S, T$, as shown in Figure \ref{fig:schematic}, and proceed with the eigenvalue solver. For example, if rows $i_1$ to $i_2$ of matrix $A$ are deleted, we replace these rows and their corresponding columns in the erased system (represented as $A_r$) with the pre-computed erasure-coded blocks from $E^{T}A$ and $A E$, respectively, to preserve symmetry. Similarly, the erased rows and columns of matrix $B$, are replaced with the corresponding rows and columns of $E^{T}B$ and $BE$. The intersecting square-block from ($i_{1},i_{1}$) to ($i_{2},i_{2}$) of $B$ is replaced by the square block of size $(i_{2}-i_{1}) \times (i_{2}-i_{1})$ of $E^{T}BE$ of the augmented matrix $\Tilde{B}$ as illustrated in Figure~\ref{fig:schematic}. Once the matrices have been reconstituted, the solver resumes execution. 

\begin{figure}[h! tbp]
\centering
\includegraphics[width=.99\linewidth]{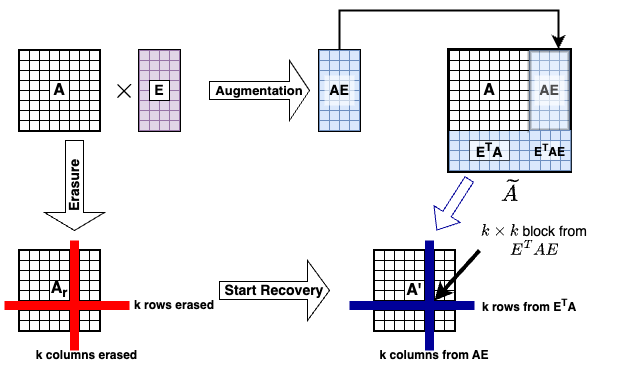}
  \caption{Erasure-Coded TraceMin Solver Flow. 
  }
\label{fig:schematic}
\end{figure}

\paragraph*{\textbf{Example}}
Continuing our previous example, we have $A$ and $E$ as before. We use the recovery information 
\[ 
R^T = \begin{bmatrix}
1.83 & -1.25 & 0.06 & 1.21 \\
0.45 & -0.49 & 0.38 & 1.01 \\
\end{bmatrix}
\quad 
S = \begin{bmatrix}
    2.7154 & 1.4574 \\
1.4574 & 1.2602 \\
\end{bmatrix}
\quad
T = \begin{bmatrix}
    2.0151 & 1.7219 \\
1.7219 & 1.9159 \\
\end{bmatrix}. 
\]
Suppose as we were solving an eigenvalue problem $Ax = \lambda x$, we had a failure in the 3rd row. Then we'd assemble the new matrix by replacing the 3rd row and column with information from the 1st row of $R^T$. (There is nothing special about this row, we could have used the 2nd as well, but our convention is to use the first unused row.) This yields the new \emph{generalized} eigenvalue problem with $A'x = \lambda B'x$
\[
A' = \begin{bmatrix}
    2.0 & -1.0 & 1.83 & 0.0 \\
-1.0 & 2.0 & -1.25 & 0.0 \\
1.83 & -1.25 & 2.7154 & 1.21 \\
0.0 & 0.0 & 1.21 & 2.0 \\
\end{bmatrix}
\quad 
B' = \begin{bmatrix}
    1.0 & 0.0 & 0.98 & 0.0 \\
0.0 & 1.0 & 0.13 & 0.0 \\
0.98 & 0.13 & 2.0151 & 0.87 \\
0.0 & 0.0 & 0.87 & 1.0 
\end{bmatrix} 
\]
The generalized eigenvalues and vectors of this new system are 
\[ \begin{array}{cccccc}
    0.382 & 1.382 & 2.618 & 3.618
\end{array}
\] 
\[ \begin{bmatrix} 
0.7405 & 1.2889 & 1.2889 & -0.7405 \\
-0.454 & 0.4629 & -0.2806 & -0.749 \\
-1.1349 & -0.7014 & -0.7014 & 1.1349 \\
0.6156 & 0.0087 & 1.2117 & -1.3591 \\
\end{bmatrix}.  \] 
To recover the eigenvectors of the original system, we need to use the ``recovery equation $x + Er$''. In this case -- when applied to each eigenvector as a matrix equation -- this becomes
\[  \begin{aligned} 
& \begin{bmatrix} 
0.7405 & 1.2889 & 1.2889 & -0.7405 \\
-0.454 & 0.4629 & -0.2806 & -0.749 \\
0 & 0 & 0 & 0 \\
0.6156 & 0.0087 & 1.2117 & -1.3591 \\
\end{bmatrix}  + \begin{bmatrix}  0.98 \\ 
 0.13\\ 
 0.53\\ 
 0.87 \end{bmatrix} 
 \begin{bmatrix} 
 -1.1349 & -0.7014 & -0.7014 & 1.1349
 \end{bmatrix} \\
 & = 
 \begin{bmatrix} 
 -0.3717 & 0.6015 & 0.6015 & 0.3717 \\
-0.6015 & 0.3717 & -0.3717 & -0.6015 \\
-0.6015 & -0.3717 & -0.3717 & 0.6015 \\
-0.3717 & -0.6015 & 0.6015 & -0.3717 \\
\end{bmatrix}
\end{aligned}
 \]
Thus, we \emph{recover} the original eigenvectors from the generalized eigensystem.

To describe and analyze this process formally, we will permute the system so that $C$ is the part of $A$ that is non-faulty and $c$ are the corresponding coefficients of $x$ that were not erased.  The faulty components will be placed at the \emph{end} of the matrix for analysis as in 
\begin{equation} \label{eq:faulty-partitioning} Ax = \lambda x \quad \implies \quad \begin{bmatrix}
    C & F_1 \\
    F_1^T & F_2^T
\end{bmatrix} \begin{bmatrix} 
c \\ 
f
\end{bmatrix} 
= \lambda 
\begin{bmatrix} 
c \\ 
f
\end{bmatrix}. \end{equation} The matrices $F_1^T$ and $F_2^T$ correspond to the erased rows, as well as the components $f$. Suppose there are $\ell \le k$ failed or erased rows. The matrix $C$ just consists of the rows and columns of $A$ that are still correctly executing. We are going to replace the rows and columns corresponding to $F_1$ with $F_2$ with the redundant data. Let  
\begin{equation} \label{eq:reconstituted-matrices} R^T = E^T A =\begin{bmatrix} Z^T\\ Y^T \end{bmatrix} = \begin{bmatrix} Z_C^T & Z_F^T \\ Y_C^T & Y_F^T \end{bmatrix} \quad \text { and } \quad E^T = \begin{bmatrix} E_Z^T \\ E_Y^T \end{bmatrix} = \begin{bmatrix} E_{ZC}^T & E_{ZF}^T \\ E_{YC}^T & E_{YF} \end{bmatrix}  \end{equation}
be a partition of the redundancy data into the first $\ell$ rows of $R^T$ and the later rows of $R$, along with a corresponding partition of the first $\ell$ rows of the matrix $E^T$; the column partition is given by the correct and faulty rows and columns of the matrix $A$. Also, let $S_Z$ and $T_Z$ be the first $\ell$ rows and columns of $S$ and $T$, respectively.  
Then the reconstituted eigensystem is  
\begin{equation}  \label{eq:reconstituted-full} \begin{bmatrix} 
C & Z_C \\ 
Z_C^T & S_Z \end{bmatrix}
\begin{bmatrix}
    c \\ r 
\end{bmatrix} = 
\lambda 
\begin{bmatrix} 
I & E_{CZ} \\ 
E_{CZ}^T & T_Z \end{bmatrix}
\begin{bmatrix}
    c \\ r 
\end{bmatrix}.
\end{equation}
Here $r$ are components of the solution that are not part of the original solution but need to be included into the solve. (The values in $r$ correspond to the 3rd row of our eigenvector matrix above.)

One may question the need for a coding block, as opposed to replacing erased parts of matrix $A$ with corresponding blocks of $A$ itself. The problem with this approach is that the matrix $A$ will have to be replicated $(k+1)$-fold to tolerate $k$ faults. This is infeasible for problems that operate close to limit of memory.

We now show that this process leaves the eigenvalues unchanged and permits us to recover the eigenvectors. We also simplify notation and assume we use \emph{all} of the redundant data. We can do this without loss of generality because $Z_C, S_Z,$ and $T_Z$ \emph{only} use the information corresponding to the first $\ell$ columns of $E$.

\begin{theorem}
[Eigenvalue Equivalence]
\label{th:eigenvalue_equivalence}
Consider a eigenvalue problem $Ax = \lambda x$ where $A$ is symmetric. Let $E$ be a coding matrix with Kruskal row rank $k$. Let $R = AE$, $S = E^T A E$, and $T = E^T E$. Suppose there are $k$ faults, which result in loss of $k$-rows and columns of $A$. Without loss of generality, we have the problem permuted so these are the last $k$ rows and columns as in \ref{eq:faulty-partitioning}. Let 
\[ E = \begin{bmatrix} E_C \\ E_F \end{bmatrix} \quad \text{ and } \quad R = \begin{bmatrix} R_C \\ R_F \end{bmatrix} \]  correspond to a partition of the data in $E$ and $Z$ into the same set of \emph{correct} and \emph{faulty} rows. Then the generalized eigenvalues of $A' y = \lambda' B' y$, that is
\begin{equation} \label{eq:reconstituted}
\begin{bmatrix} C & R_C \\ R_C^T & S \end{bmatrix} \begin{bmatrix} c \\ r \end{bmatrix} = \lambda' \begin{bmatrix} I & E_C \\ E_C^T & T \end{bmatrix} \begin{bmatrix} c \\ r \end{bmatrix},
\end{equation} 
are identical to those of $A$. Also, the eigenvectors are related by a simple linear transformation. 
\end{theorem}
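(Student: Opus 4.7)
The plan is to exhibit a single invertible matrix $Q$ that puts the reconstituted pencil \eqref{eq:reconstituted} into congruence with the pencil $(A,I)$, so that its generalized eigenvalues coincide exactly with those of $A$. With the partition of \eqref{eq:faulty-partitioning} and $E = \left[\begin{smallmatrix} E_C \\ E_F \end{smallmatrix}\right]$, I would take
\[ Q = \begin{bmatrix} I_{n-k} & E_C \\ 0 & E_F \end{bmatrix}. \]
The Kruskal row rank $k$ hypothesis on $E$ is precisely the statement that every $k$-subset of its rows is linearly independent, and hence the $k \times k$ block $E_F$ built from the faulty rows is nonsingular. Being block upper-triangular with nonsingular diagonal blocks, $Q$ is invertible.

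The heart of the argument is then two direct block-matrix verifications: $B' = Q^T Q$ and $A' = Q^T A Q$. The first reduces to the $(2,2)$-block identity $E_C^T E_C + E_F^T E_F = E^T E = T$, with the other three blocks immediate. For the second, the $(1,2)$ block of $Q^T A Q$ works out to $C E_C + F_1 E_F$, which is exactly the non-faulty portion of $AE$ and hence equals $R_C$; the $(2,1)$ block is $R_C^T$ by the symmetry of $A$; and the $(2,2)$ block expands to $E^T A E = S$. Both identities are routine once $Q$ has been chosen correctly.

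Combining these identities yields $A' - \lambda' B' = Q^T (A - \lambda' I) Q$, a congruence, and hence a strict equivalence of pencils, by an invertible matrix. Strict equivalence preserves the finite generalized eigenvalues, and since $B'$ is nonsingular no infinite or spurious eigenvalues arise (in contrast to the singular pencil of \eqref{eq:generasureeig}), so the spectrum of $(A',B')$ is identical to that of $A$. The eigenvector correspondence is given by $Q$ itself: a generalized eigenvector $\left[\begin{smallmatrix} c \\ r \end{smallmatrix}\right]$ of $(A',B')$ maps to the eigenvector $x = Q \left[\begin{smallmatrix} c \\ r \end{smallmatrix}\right]$ of $A$, with components $x_C = c + E_C r$ and $x_F = E_F r$, and the inverse map gives $r = E_F^{-1} x_F$, $c = x_C - E_C r$. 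The main obstacle is recognizing the correct form of $Q$; once identified, the Kruskal hypothesis enters at a single point, namely the invertibility of $E_F$ for any pattern of $k$ faults, and everything else is mechanical block algebra.
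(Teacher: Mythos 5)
Your proposal is correct and follows essentially the same route as the paper: the matrix $Q = \left[\begin{smallmatrix} I & E_C \\ 0 & E_F \end{smallmatrix}\right]$ is exactly the paper's transformation matrix $M$, invertibility comes from the Kruskal-rank hypothesis making $E_F$ nonsingular, and the argument rests on the congruence identities $A' = Q^T A Q$ and $B' = Q^T Q$. Your version spells out the block-by-block verification (e.g., $R_C = C E_C + F_1 E_F$ and $T = E_C^T E_C + E_F^T E_F$) that the paper leaves implicit, but the proof is the same.
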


\begin{proof}
We will show that if $y = \left[ \begin{smallmatrix} c \\ r \end{smallmatrix} \right] $ is a generalized eigenvector of \ref{eq:reconstituted} with eigenvalue $\lambda'$ then 
\begin{equation} 
 v = \begin{bmatrix} v_c \\ v_f \end{bmatrix} = \begin{bmatrix} c + E_{C}r \\ E_{F} r \end{bmatrix} = \underbrace{\begin{bmatrix} I & E_C \\ 0 & E_F \end{bmatrix}}_{= M} \begin{bmatrix} c \\ r \end{bmatrix}  \end{equation}
is an eigenvector of $A$ with the same eigenvalue. Because $E$ has Kruskal rank $k$, the matrix $E_F$ is $k \times k$ and invertible, so the transformation matrix $M$ is invertible, which is the crux of the argument. Let $A' y = \lambda' B' y$ be the generalized eigenvalue problem~\eqref{eq:reconstituted}. Since $My = v$ with $A' = M^{T} A M$ and $B' = M^T M$ then we arrive at $Av = \lambda v$.  
\end{proof}

\subsection{Implementing An Erasure-Coded Power Method}
For the erasure-coded variant of the Generalized Power Method, we compute the erasure coding matrix $E$. We then derive the augmented coding blocks $R, S, T$ from matrix $A$ and $E$.   
We now show two different implementations of a power method. The first is one that exhibits the ideas clearly. The second is one that illustrates how a practical implementation might function. To be more concrete, in the first, we use matrices $A$ and $B$ to represent the operations that would be performed with modification to the matrices. In the second, we'll illustrate how these operations can be optimized using the structure of the problem. 

The preceding theory dealt with the \emph{eigensystems} that arise. When we move to these algorithms, we need to consider how to reinitialize the values of $X_F$ after a fault occurs. We assume that the values $X_F$ are totally lost. First, if an implementation periodically computes and retains the quantity $E^T X$, then we can always extract the faulty missing elements from this matrix -- this is a simple calculation related to the ideas from~\cite{zhugleich2017}.  However, since this has the flavor of checkpointing, we do not do it. Although it may seem like we should be able to utilize the non-faulty values $X_C$ to derive improved values for $X_F$, we were unable to make this idea successful. Consequently, when a fault occurs, we replace the values $X_F$ with random data. This appears to work the best in our experiments. 

\begin{algorithm}[t]
\caption{Erasure-Coded Generalized Block Power Method with QR Subspace Iteration}\label{algo:gen_block_power_qr}
\begin{algorithmic}[1]
\Require Symmetric matrix $A \in \mathbb{R}^{n \times n}$, initial matrix $X_0 \in \mathbb{R}^{n \times k}$, tolerance $\varepsilon$, redundancy information $E, R, S, T$
\Ensure Approximate eigenpairs $(\Lambda, X)$ where $A X \approx \Lambda B X $
\State Initialize $X \gets X_0$
\State Initialize $B \gets I$
\For{$i = 1, 2, ...$ until convergence}
    \If{Fault Occurs}
            \State Replace the faulty rows and columns of $A$ with any unused rows and columns of $R$ and the intersection of rows and columns with block $S$ to reconstitute $A'$
            \State Replace the faulty rows and columns of $B$ with any unused rows and columns of $E$ and the intersection of rows and columns with block $T$ to reconstitute $B'$.
            \State{Use the reconstituted $A'$ and $B'$ as $A$ and $B$ respectively and continue the solver; $A \gets A'$; $B \gets B'$ }
            \State{Set any failed rows of $X$ to random data and orthogonalize with a QR factorization.}
    \EndIf
    \State $Y \gets A X$
    \State $Z \gets B^{-1} Y$ \Comment{Solve $B Z = A X$}
    \State $[Q, \sim] \gets \text{qr}(Z)$ \Comment{Thin QR factorization}
    \State $A_Q \gets Q^\top A Q$, \quad $B_Q \gets Q^\top B Q$
    \State $[U, D] \gets \text{eig}(A_Q, B_Q)$ \Comment{Solve small generalized eigenproblem}
    \State Sort eigenvalues $\Lambda$ in $D$ in descending order; reorder $U$ accordingly
    \State $X_{\text{new}} \gets Q U$
    \State Compute residual $r \gets \| A X_{\text{new}} - B X_{\text{new}} \Lambda \|_F$
    \State Compute relative residual $r_{\mathrm{rel}} \gets \frac{r}{\|A\|_F}$
    \If{$r_{rel} < \varepsilon$}
        \State \textbf{break}
    \EndIf
    \State $X \gets X_{\text{new}}$
\EndFor
\Return $X, \Lambda$
\end{algorithmic}
\end{algorithm}

\begin{algorithm}[t]
\caption{Erasure-Coded Generalized Block Power Method with QR Subspace Iteration }\label{algo:gen_block_power_qr_bsolve}
\begin{algorithmic}[1]
\Require Symmetric matrix $A \in \mathbb{R}^{n \times n}$, initial orthogonal matrix $X_0 \in \mathbb{R}^{n \times k}$, tolerance $\varepsilon$, redundancy information $E^{(0)}, Z^{(0)}, S^{(0)}, T^{(0)}$ up to $k$ faults.
\Ensure Approximate eigenpairs $(\Lambda, X)$ where $A X \approx \Lambda B X $
\State Initialize $X \gets X_0$, $\mathcal{F} = \emptyset$ (the set of faulty indices), 
\State $E \gets \text{getErasureCodedMatrix}(n,k)$ \Comment{generate encoding matrix}
\State $R \gets A \cdot E,\; S \gets E^\top A \cdot E,\; T \gets E^\top E$ \Comment{generate coding blocks}
\For{$i = 1, 2, ...$ until convergence}
    \If{a new fault occurs}
      \State Let $\mathcal{F}_{\text{new}}$ be the set of new faulty indices 
      \State Add the faculty indices to $\mathcal{F}$ (unless $\mathcal{F}$ exceeds $k$ indices, in which case terminate with an error that the fault capacity was exceeded). 
      \State Set $E_C, Z_C$ to the first $|\mathcal{F}|$ columns of $E^{(0)}, Z^{(0)}$
      \State Set $S, T$ to be the first $|\mathcal{F}| \times |\mathcal{F}|$ block of $S^{(0)}, T^{(0)}$
      \State Compute a factorization of $T - E_C^T E_C$ for applications of $(T - E_C^T E_C)^{-1}$
      \State Set $X(\mathcal{F}_{\text{new}},:) = $ random normal entries 
      \State Set $A(\mathcal{F}_{\text{new}}, :) = A(:, \mathcal{F}_{\text{new}}) = 0$ (or implicitly via the fault)
    \EndIf 
    \If{$\mathcal{F}_{\text{new}}\neq\emptyset$}
        \State Update fault set $\mathcal{F}\!\gets\!\mathrm{sort}(\mathcal{F}\cup \mathcal{F}_{\text{new}})$; $EF\!\gets\!E[\mathcal{F},:]$, $G\!\gets\!\mathrm{chol}(EF^\top EF)$
    \EndIf
    \State $Y \gets \text{Aop}(A,R,S,X,\mathcal{F})$
    \State $Z \gets \text{invBop}(G,E,EF,Y,\mathcal{F})$ \Comment{Solve $B Z = A X$}  
    \State $[Q, \sim] \gets \text{qr}(Z)$ \Comment{Thin QR factorization}
    \State $AQ \gets Q^\top \cdot \text{Aop}(A,R,S,Q,\mathcal{F})$
            \quad $BQ \gets Q^\top \cdot \text{Bop}(E,T,Q,\mathcal{F})$
    \State $[U, D] \gets \text{eig}(A_Q, B_Q)$ \Comment{Solve small generalized eigenproblem}
    \State Sort eigenvalues $\Lambda$ in $D$ in descending order; reorder $U$ accordingly
    \State $X_{\text{new}} \gets Q U$
    \State Compute residual $r \gets \| A X_{\text{new}} - B X_{\text{new}} \Lambda \|_F$
    \State Compute relative residual $r_{rel} \gets \frac{r}{\| A \|_F}$
    \If{$r_{rel} < \varepsilon$}
        \State \textbf{break}
    \EndIf
    \State $X \gets X_{\text{new}}$
\EndFor
\Return $X, \Lambda$ 
\end{algorithmic}
\end{algorithm}

\begin{algorithm}[tb]
\caption{Erasure-Coded Operator Application and Inversion}
\label{alg:ec-ops}
\begin{algorithmic}[1]
\Require Mode $\in \{\texttt{Aop}, \texttt{Bop}, \texttt{invBop}\}$; matrices $A, E, Z, S, T, EF$; input $X$; faulty indices $F$; Cholesky factor $G = \mathrm{chol}(EF^{\top}EF,\text{upper})$
\Ensure Output $Y$

\Function{Aop}{$A, Z, S, X, \mathcal{F}$}
    \State $Y \gets A X$
    \State $Y \gets Y + Z\, X(\mathcal{F},:)$
    \State $Y(\mathcal{F},:) \gets Y(\mathcal{F},:) + Z^{\top} X$
    \State $Y(\mathcal{F},:) \gets Y(\mathcal{F},:) + (S - Z(\mathcal{F},:) - Z(\mathcal{F},:)^{\top})\, X(\mathcal{F},:)$ \Comment{New post-failure values}
    \State \Return $Y$
\EndFunction

\Function{Bop}{$E, T, X, F$}
    \State $Y \gets X$; \quad $Y(F,:) \gets 0$
    \State $Y \gets Y + E\, X(F,:)$
    \State $Y(F,:) \gets Y(F,:) + E^{\top} X$
    \State $Y(F,:) \gets Y(F,:) + (T - E(F,:) - E(F,:)^{\top})\, X(F,:)$
    \State \Return $Y$
\EndFunction

\Function{invBop}{$G, E, EF, X, F$}
    \State $Y \gets X$; \quad $Y(F,:) \gets 0$
    \State $W \gets E^{\top} X$; \quad $V \gets X(F,:) + EF^{\top} X(F,:)$
    \State $Y \gets Y + E\, (G^{-1} W) - E\, (G^{-1} V)$
    \State $Y(F,:) \gets Y(F,:) + (I_F + EF)\, (G^{-1} (V - W))$
    \State \Return $Y$
\EndFunction
\end{algorithmic}
\end{algorithm}

The power method for solving the generalized eigenvalue problem for matrices $A$ and $B$ involves multiplying both on the left and right by an orthogonal matrix $Q$, as outlined in Algorithm~\ref{algo:gen_block_power_qr}. At each iteration, a QR factorization is performed to compute the orthonormal matrix $Q$. Similar to the standard power method, the sequence of vectors $X$ is updated by multiplying the orthogonal matrix $Q$ with the approximate eigenvectors $U$ obtained from solving the projected eigenproblem defined by the matrices $A_Q = Q^\top A Q$ and $B_Q = Q^\top B Q$. Convergence is assessed by computing the relative residual $r_{\text{rel}}$ and comparing it against a predefined tolerance $\varepsilon$ for the eigenvector corresponding to the dominant eigenvalue of the matrix pencil $(A, B)$.

In the generalized power method, the key step is an iteration is computing $B^{-1} A X$. The structure of our matrix $B$ makes this easy in the event of any fault. Recall that 
\[ B = \begin{bmatrix} I & E_C \\ E_C^T & E^T E \end{bmatrix}. \]
This means it is -- at most -- a rank $k$ update to the identity matrix $B$. Consequently, the matrix has a simple closed form inverse that can be used to implement an efficient solve 
\[ B^{-1} = \begin{bmatrix} 
I + E_C ( E^T E - E_C^TE_C)^{-1} E_C^T & -E ( E^T E - E_C^TE_C)^{-1} \\ 
- ( E^T E - E_C^TE_C)^{-1} E^T & ( E^T E - E_C^TE_C)^{-1}
\end{bmatrix}.
\]
Since $E = \left[ \begin{smallmatrix} E_C \\ E_F \end{smallmatrix} \right]$, we have $E^T E - E_C^T E_C = E_F^T E_F$. This matrix is always invertible because $E$  has Kruskal row rank $k$. 
Also, the inverse here only occurs for a small $k \times k$ block, $( E^T E - E_C^TE_C)^{-1}$, which is repeated through all the factors and could be treated practically with either a small inverse or a small LU factorization.  

The result is a set of operations that implement these operations without explicitly reforming the matrix $A$. The auxiliary procedures \texttt{Aop}, \texttt{Bop}, and \texttt{invBop}, which are invoked in Algorithm~\ref{algo:gen_block_power_qr_bsolve}, are specified in Algorithm~\ref{alg:ec-ops}. The key step is compute a Cholesky factor with all the solves. 


\subsection{Implemented An Erasure-Coded TraceMin}
Our erasure-coded eigensolver builds upon the TraceMin algorithm described in \cite{2000sameh}, and is detailed in Algorithm \ref{algo:tracemin}, which outlines the procedure for handling faults.
  
The eigenvectors $X$ of the original eigenvalue problem are recovered from the eigenvectors $X'$ of the reconstituted system, computed as $X$ in Algorithm~\ref{algo:tracemin}. 

\begin{algorithm}[tb]
\caption{Erasure-coded TraceMin}\label{algo:tracemin}
\textbf{Input:} $A$ symmetric matrix, subspace dimension $s > 0$, number of erasures tolerated $k > s$ \\
\textbf{Output:} $\Theta (eigenvalues), X (eigenvectors)$
\begin{algorithmic}[1]
\State{Generate the sparse coding matrix $E$ which can tolerate up to $k$ row/ column failures using coding matrix.}
\State{Compute augmented blocks ($Z$, $R$) and ($Q$, $S$) for matrix A and B respectively.}
\State{Choose a block size $s_2 = 2\times s$ and an $n \times s_2$ random matrix $V_1$ of full rank such that $V_{1}^{T}BV_{1} = I$.}
\For{$i = 1, 2, ...$ until convergence}
\If{Fault Occurs}
    \If{Fault Occurs at a Node (say F)}
    \State{Replace the faulty rows and columns of $A$ a with the $Z$ and $Z'$ blocks and the intersection of rows and columns with block $R$}.

    \State{Replace the same rows and columns of $B$ a with the $Q$ and $Q'$ blocks and the intersection of rows and columns with block $S$}.
    
    \State{Replacing the faulty rows and columns  with coded-blocks to reconstitute $A'$ and $B'$; $A = A'$; $B = B'$ }
\EndIf
\EndIf
\State Compute $W_{i} = AV_{i}$ and the interaction matrix $H_{i} = V_{i}^{T} W_{i}$.
\State Compute the eigenpairs of $(Y_{i};\Theta_{i})$ of  $H_{i}$. Sort eigenvalues in ascending order and rearrange the corresponding eigenvectors.
\State Compute the Ritz Vectors $X_{i} = V_{i}Y_{i}$.
\State Compute the residuals $R_{i} = AX_{i} - B X_{i}\Theta_{i}$.
\State Test for Convergence.
\State Solve the following linear system approximately via the CG.\\ $AZ_{i+1}=BX_{i}$
\State B-orthonormalize $Z_{i+1}$ into $V_{i+1}$.
\EndFor
\end{algorithmic}
\end{algorithm}

The TraceMin algorithm computes a few of the smallest eigenvalues by reducing the generalized
Rayleigh quotient ($\frac{X^{T}AX}{X^{T}BX}$) step by step. In \cite{1982Sameh}, a simultaneous iteration method was introduced to address this problem. In each iteration, the previous approximation $X_{k}$ , which satisfies $X_{k}^{T}BX_{k} = I_{s}$ and $X_{k}^{T}AX_{k}= \Theta_{k}$, is updated with a correction term $\Delta_{k}$, calculated as follows:
\begin{equation}\label{eq:traceminimization}
\begin{aligned}
    min \quad & tr(X_{k}-\Delta_{k})^{T}A(X_{k}-\Delta_{k}), \\ 
    subject \: to \quad &X_{k}^{T}B\Delta_{k} = 0
\end{aligned}
\end{equation}
for any B-orthonormal basis $X_{k+1}$ of the subspace spanned by ${Z_{k+1}}$, we construct our erasure-coded scheme on top of the TraceMin algorithm, as described in Algorithm 2 of \cite{2000sameh}. In Step 1 of Algorithm~\ref{algo:tracemin}, we augment the matrices $A$ and $B$ with erasure-coded rows and columns. In the event of a fault, the system is reconstituted from the erasure coding blocks, as outlined in Step 5 of the Algorithm~\ref{algo:tracemin}. The remainder of the implementation follows Algorithm 2 in \cite{2000sameh}.

\subsection{Construction of Erasure Coding Matrix}\label{sec:coding_struc}

Although in theory a Kruskal rank of $k$ is required to guarantee recovery from every possible fault pattern, a weaker but more practical requirement significantly reduces computational complexity. This approach, referred to as \emph{recovery at random} in \cite{joyceDynamic}, assumes that faults occur randomly and can be recovered with exceptionally high probability. Furthermore, this recovery method leverages a structured sparse matrix, introducing minimal computational overhead while maintaining high efficiency.

\subsection{Coding Matrix Structure}
The coding matrix must be sparse to optimize both computation and storage. To achieve this, we generate a random matrix of dimensions $n \times p$ and arrange it in a staggered pattern to preserve the sparsity of the erasure coding block. Specifically, we ensure that each row of size $k$ contains $p$ nonzero entries, as proposed in ~\cite{joyceDynamic} (Figure \ref{fig:codingstructure}).

More specifically, the coding matrix $E$ from~\cite{joyceDynamic} is an $n\times k$ matrix constructed using a staggered nonzero pattern with $p$ randomly chosen entries for each occurrence of this pattern.\cite{joyceDynamic} showed that selecting $p$ to be larger than $\frac{\log{k}}{\log{\log{k}}}$ ensures that the probability of a random set of $k$ rows of matrix $E$ drawn from the $n \times k$ p-staggered distribution being linearly dependent is less than $\big(\frac{e}{p+1}\big)^{p+1}$. Furthermore, they demonstrated that, with high probability, the maximum number of rows (out of $k$ randomly chosen rows) in matrix $E$ that share the same nonzero structure is approximately $\frac{\ln{k}}{\ln{\ln{k}}}(1 + o(1))$.  

\begin{wrapfigure}{r}{0.5\textwidth}
  \begin{center}
    \includegraphics[width=0.2\linewidth]{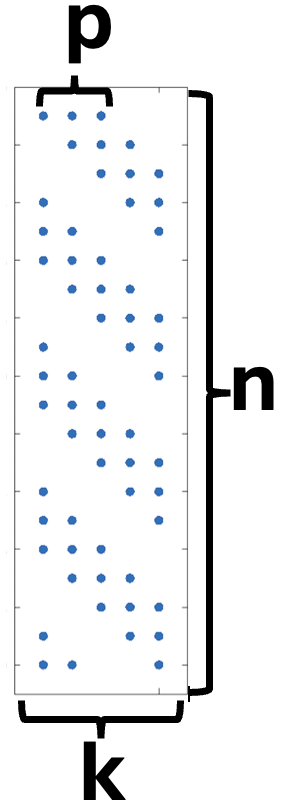}
  \end{center}
  \vspace{-0.5em}
    \caption{Coding Matrix}
    \vspace{-1.5em}
    \label{fig:codingstructure}
\end{wrapfigure}   We illustrate this $n\times k$ coding matrix, using the staggered non-zero pattern with $p$ non-zeros entries, in Figure \ref{fig:codingstructure}. 
We adapt this coding matrix to improve its design for eigenvalues and eigenvectors, which are more sensitive than linear systems. The erasure of critical row-column pairs can significantly affect the stability and convergence of the eigensolver. In the case of adversarial deletions, the rows that contribute most to the eigenvalues may be erased. Therefore, assigning more weight to the rows based on their importance is crucial to preserve the integrity of the data. We used leverage scores to assign appropriate importance to specific rows. In addition to random coding schemes, we also evaluated Leverage Score-weighted Coding, as described below.

\section{Experimental Results}
\label{sec:experiments}
We present detailed experimental validation of our erasure-coded fault-tolerant TraceMin eigensolver on both dense and sparse benchmark datasets described in Table \ref{tbl:benchmark}. The goal of these experiments is to demonstrate the the stability and robustness of our approach. All methods and plots are implemented in MATLAB on an Apple M1 Pro with 8 cores with 16 GB RAM.

\begin{table}[H]
\begin{tabular}{ p{0.9cm}p{2.7cm}p{1.7cm}p{1.2cm}p{6.7cm}  }
 \toprule 
  Type & Dataset & Dims. & Nonzeros & Description\\
 \midrule 
 \multirow{4}{*}{Dense} & MNIST - Train & $15K \times 15K$ & $225M$ & Training Dataset \\
 &MNIST - Test & $10K \times 10K$ & $100M$ & Test Dataset \\
&CIFAR-10 - Train  & $15K \times 15K$ & $225M$ & Training Dataset\\
&CIFAR-10 - Test & $10K \times 10K$ & $225M$  & Test Dataset \\
\midrule 
\multirow{8}{*}{Sparse}&bcsstk17 & $11K \times 11K$ & $429K$ &  Stiffness Matrix - Elevated Pressure Vessel \\ 
&bcsstk25 & $15K \times 15K$ & $252K$ & Stiffness Matrix - 76 Story Skyscraper \\
&gyro & $17K \times 17K$ & $1M$  & Model Reduction Problem \\
&msc23052 & $23K \times 23K$ & $1.15M$  & MSC/NASTRAN Shock Problem \\
&cbuckle & $14K \times 14K$ & $677K$  & Cylindrical Shell Stiffness Matrix\\
&Pres\_Poisson & $15K \times 15K$ & $716K$  & Computational Fluid Dynamics Problem \\
&jnlbrng1 & $40K \times 40K$ & $199K$  & Quadratic Journal Bearing Problem \\
&torsion1 & $40K \times 40K$ & $198K$  & Optimization Problem\\
\bottomrule 
\end{tabular}
\captionof{table}{Benchmark Datasets}
\label{tbl:benchmark}
\vspace{-10pt}
\end{table}


\subsection{Power Method vs TraceMin}
We compare the performance results of the Power Iteration Method and TraceMin across various sparse and dense datasets, as summarized in Table \ref{tbl:benchmark}. Two sets of experiments are conducted to analyze the performance impact: (1) as the problem size increases and (2) as the subspace size expands.

As the problem size increases, the runtime is expected to grow. However, it is crucial to compare the two methods to quantify their performance impact, enabling us to identify the most efficient approach for computing eigenvalues. Additionally, this comparison helps evaluate the overhead introduced by erasure coding relative to the best-performing method.

\begin{figure}[H]
\centering
\begin{subfigure}{.49\textwidth}
  \centering
  \includegraphics[width=.98\linewidth]{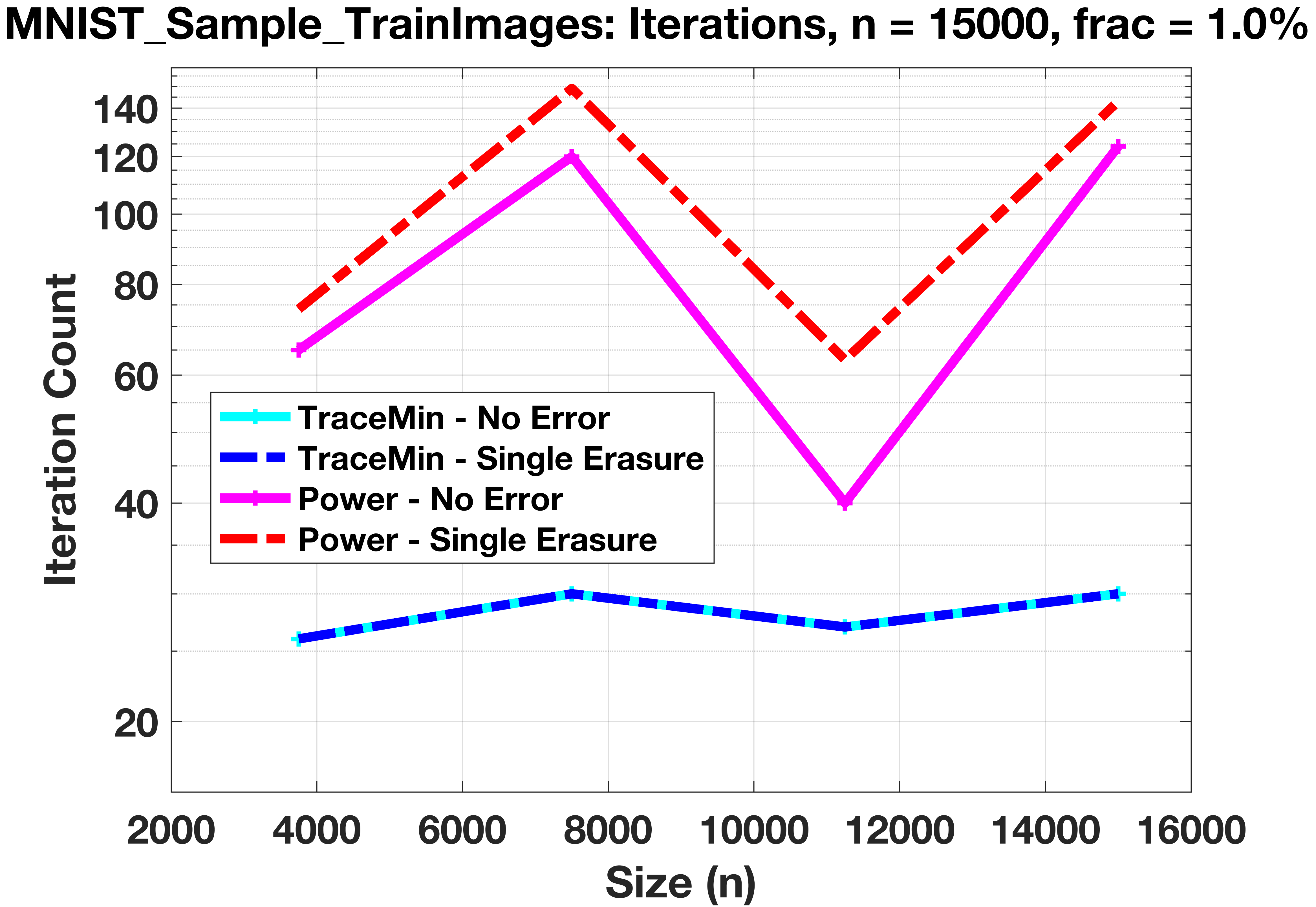}
  \caption{Iterations}
  \label{fig:power-size-iter-mnist}
\end{subfigure}%
\begin{subfigure}{.49\textwidth}
  \centering
  \includegraphics[width=.98\linewidth]{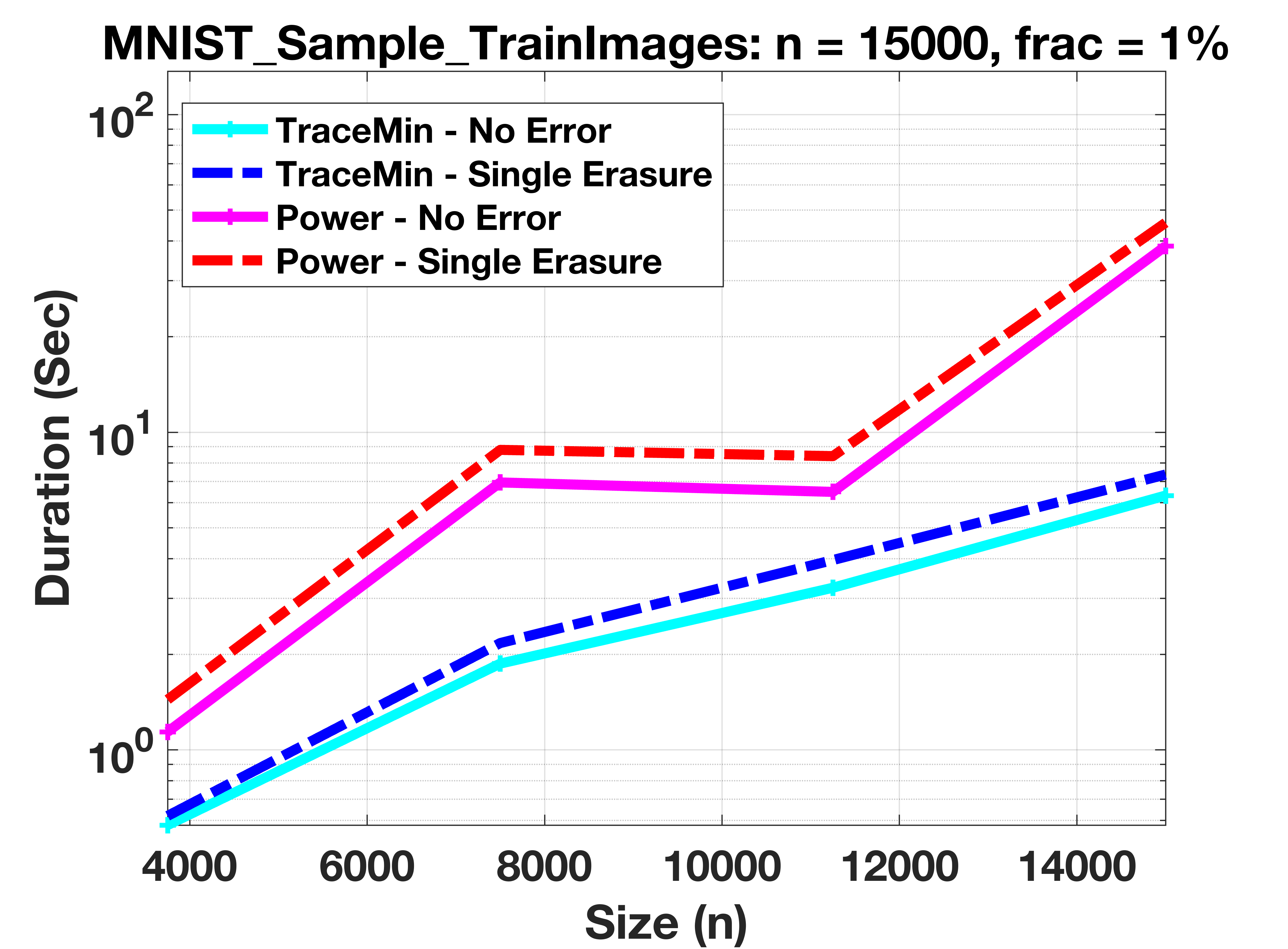}
  \caption{Timing}
  \label{fig:power-size-time-mnist}
\end{subfigure}
\caption{MNIST Train Dataset with 0.1\% Erased}
\label{fig:power-size-mnist}
\end{figure}

Figures~\ref{fig:power-size-iter-mnist} and~\ref{fig:power-size-time-mnist} compare the convergence behavior and computational cost of the \textbf{TraceMin} and \textbf{Power Method} algorithms under both \textbf{no-error} and \textbf{single-fault} scenarios for varying problem sizes on the MNIST dataset.

Figure~\ref{fig:power-size-time-mnist} clearly demonstrates that while erasure coding introduces some overhead for both the Power Method and TraceMin, the TraceMin algorithm consistently outperforms the Power Method, achieving significantly faster runtimes—often by an order of magnitude—across all matrix sizes. Although the runtime of both methods increases with problem size, the performance degradation is far more pronounced for the Power Method. The Single Erasure cases for both algorithms exhibit higher runtimes than their no-error counterparts due to the additional fill-in introduced by erasure coding—nonzero coding rows and columns that increase the effective matrix density and, consequently, the computational cost per iteration. Overall, TraceMin demonstrates superior scalability and robustness to erasures, whereas the Power Method shows a steeper increase in both runtime and iteration count, underscoring its inefficiency for large-scale or fault-tolerant eigenvalue computations.

\begin{figure}[H]
\centering
\begin{subfigure}{.49\textwidth}
  \centering
  \includegraphics[width=.98\linewidth]{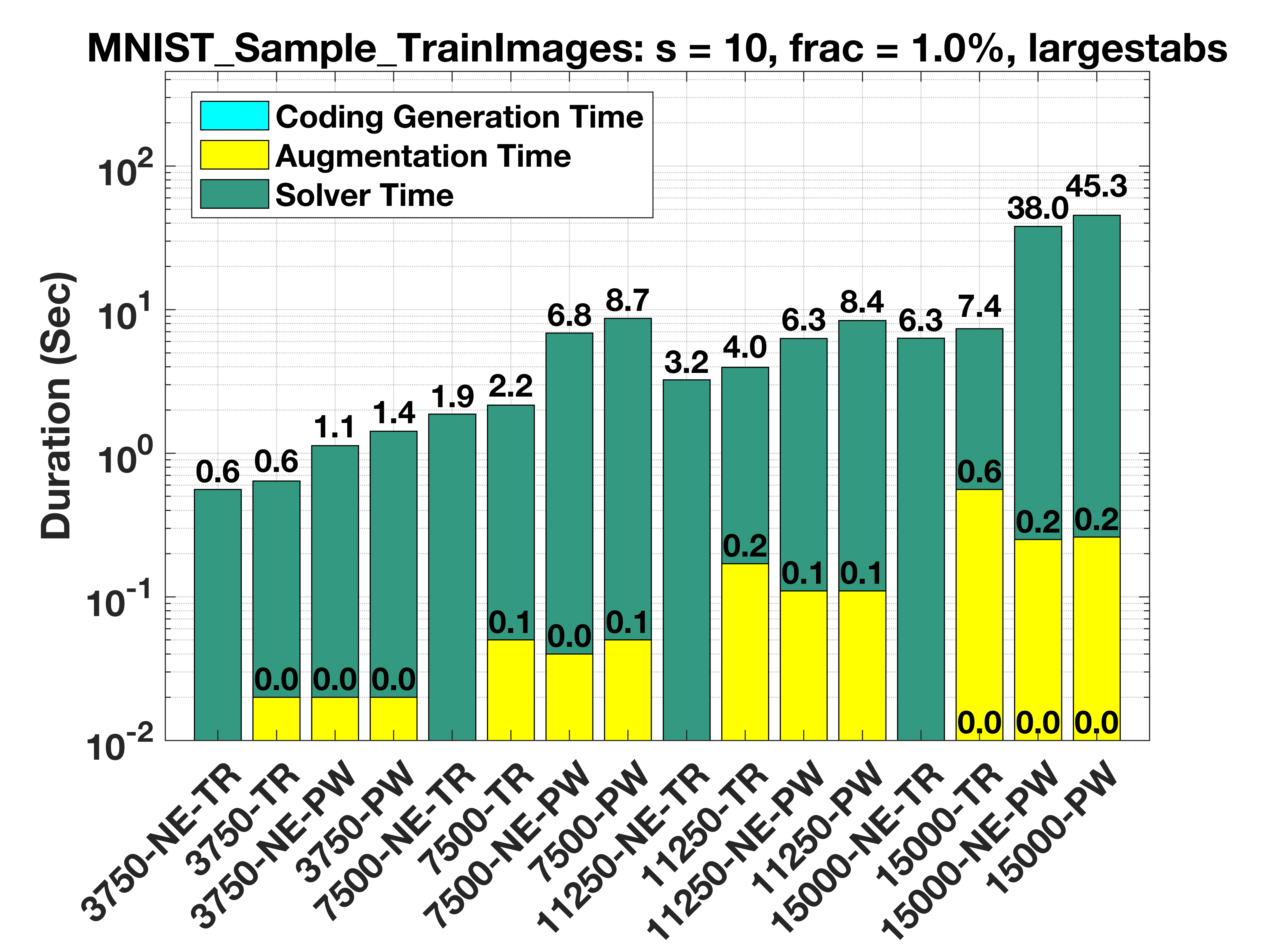}
  \caption{MNIST}
  \label{fig:power-size-time-breakdown-mnist}
\end{subfigure}%
\begin{subfigure}{.49\textwidth}
  \centering
  \includegraphics[width=.98\linewidth]{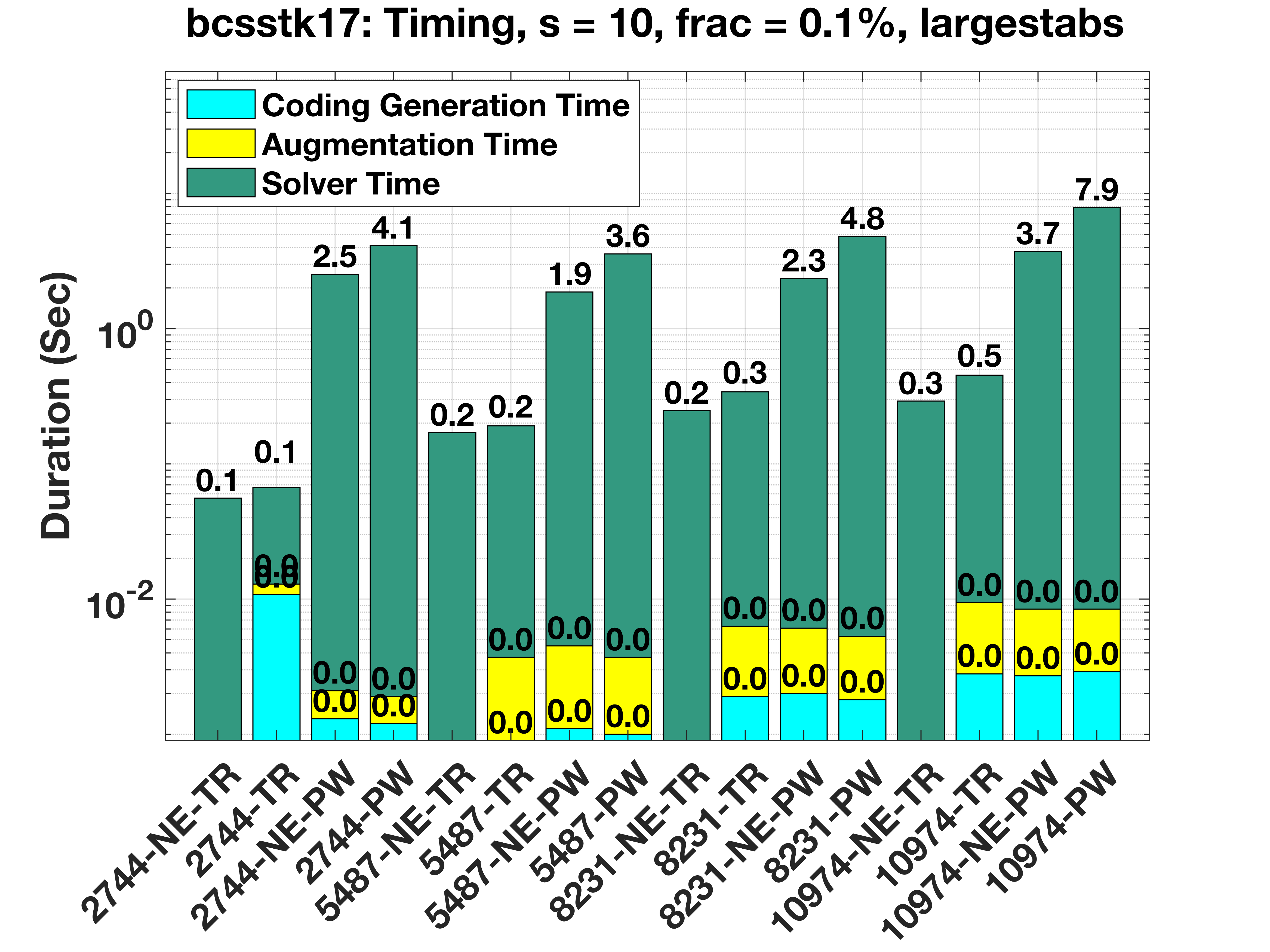}
  \caption{bcsstk17}
  \label{fig:power-size-time-breakdown-bcsstk17}
\end{subfigure}
\caption{Timing Breakdown}
\label{fig:time-breakdown}
\end{figure}
In Figure \ref{fig:time-breakdown}, the runs for Power Iteration with optimized solver (Algorithm \ref{algo:gen_block_power_qr_bsolve}) and TraceMin (Algorithm \ref{algo:tracemin}) are labeled with the suffixes ``PW'' and ``TR'', respectively, while ``NE'' represents the ``No Erasure'' scenario. The figure illustrates that in both dense matrix scenarios (e.g., MNIST) and sparse matrix scenarios (e.g., bcsstk17), TraceMin is significantly faster than the Power Iteration method. This disparity arises because the solver time is substantially higher for the Power Iteration method compared to TraceMin.

\begin{figure}[H]
\centering
\begin{subfigure}{.49\textwidth}
  \centering
  \includegraphics[width=.98\linewidth]{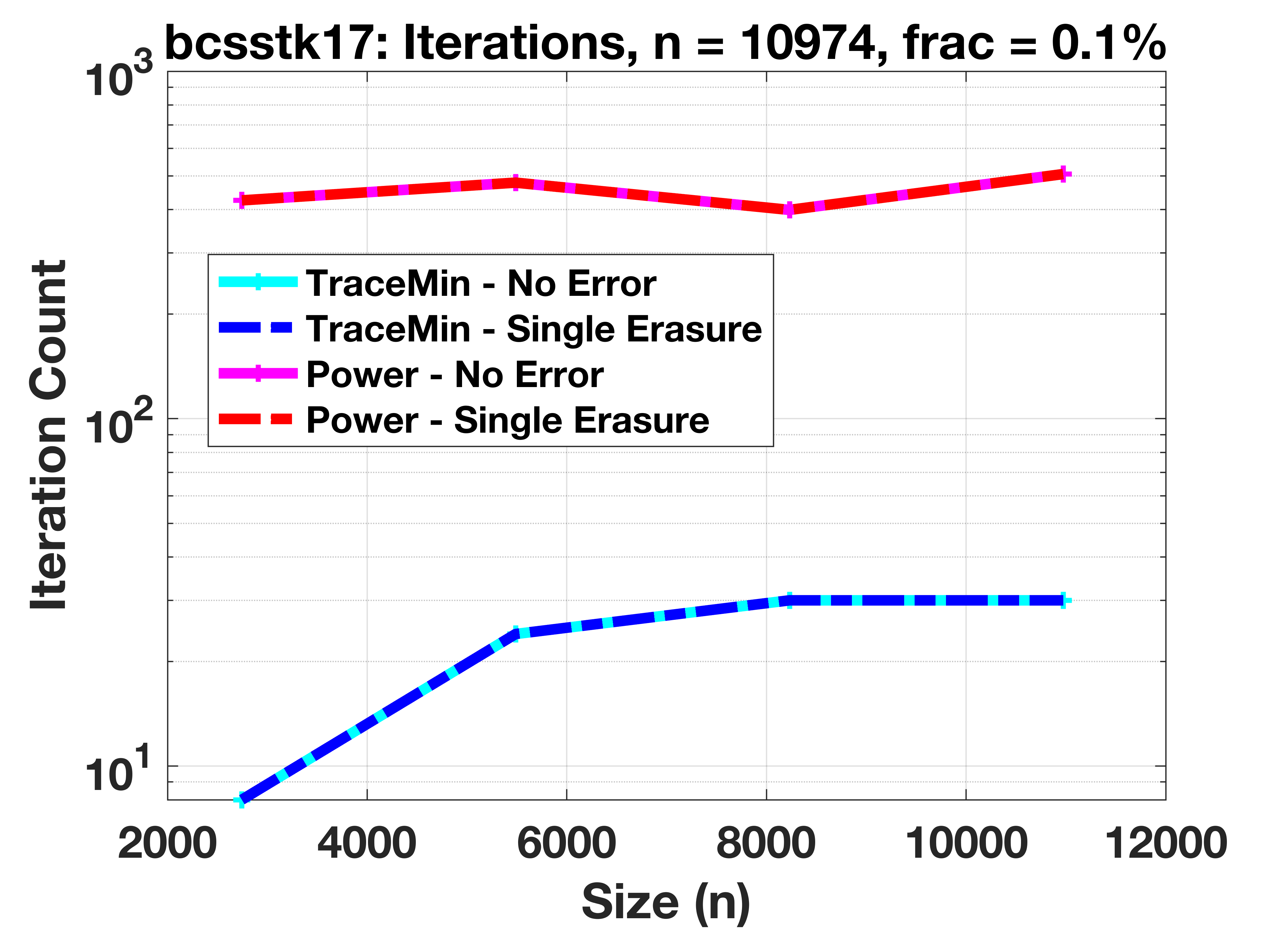}
  \caption{Iterations}
  \label{fig:power-size-iter-bcsstk17}
\end{subfigure}%
\begin{subfigure}{.49\textwidth}
  \centering
  \includegraphics[width=.98\linewidth]{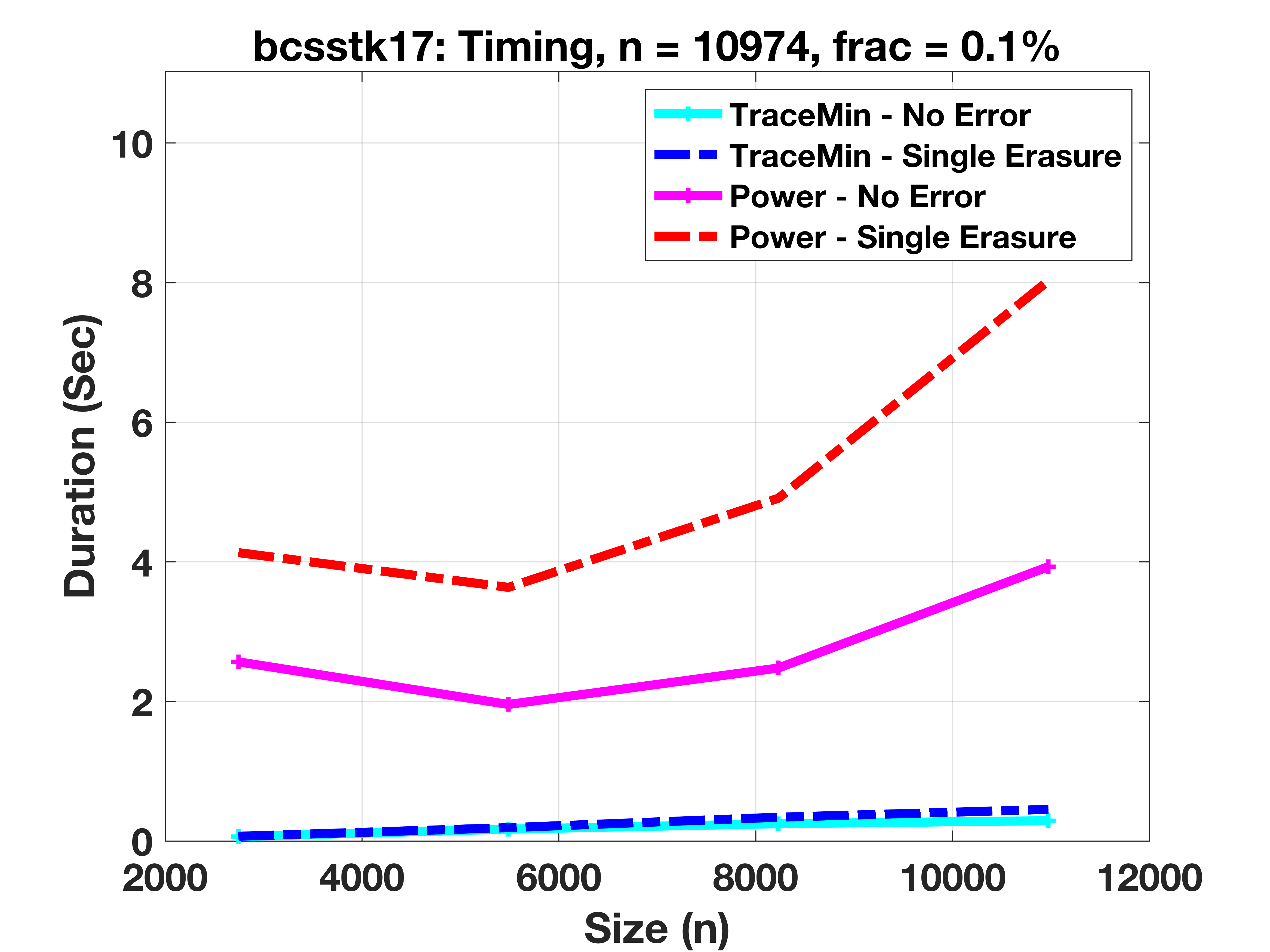}
  \caption{Timing}
  \label{fig:power-size-time-bcsstk17}
\end{subfigure}
\caption{Sparse bcsstk17 Train Dataset with 0.1\% Erased}
\label{fig:power-size-bcsstk17}
\end{figure}

As shown in Figure~\ref{fig:power-size-iter-bcsstk17}, the Power Method (Algorithms~\ref{algo:gen_block_power_qr} and \ref{algo:gen_block_power_qr_bsolve}) requires significantly more iterations to converge for sparse dataset \textbf{bcsstk17} compared to Erasure-Coded TraceMin (Algorithm~\ref{algo:tracemin}), leading to a higher overall execution time, as illustrated in Figure~\ref{fig:power-size-time-bcsstk17}. Furthermore, Figure~\ref{fig:power-size-time-bcsstk17} demonstrates that the erasure scenario incurs a higher runtime than its no-erasure counterpart. This increase in computational cost arises because erasure coding introduces additional fill-in in the sparse system due to nonzero coding rows and columns, thereby increasing the time required for each iteration and ultimately resulting in a longer end-to-end execution time. The results clearly indicate that the TraceMin algorithm substantially outperforms the Power Method across all problem sizes. The runtime of TraceMin remains consistently low and scales modestly with increasing matrix size, demonstrating its computational efficiency and scalability. In contrast, the Power Method exhibits significantly higher runtimes, particularly under the Single Erasure scenario, where the cost increases sharply with problem size.

 The Power Method is designed to compute the largest eigenvalue and its corresponding eigenvector by using subspace iterations. As the subspace size increases, the size of eigenvectors increases as well and hence, the computational requirements grow significantly, making the method progressively more resource-intensive and costly to execute.

\begin{figure}[H]
\centering
\begin{subfigure}{.49\textwidth}
  \centering
  \includegraphics[width=.98\linewidth]{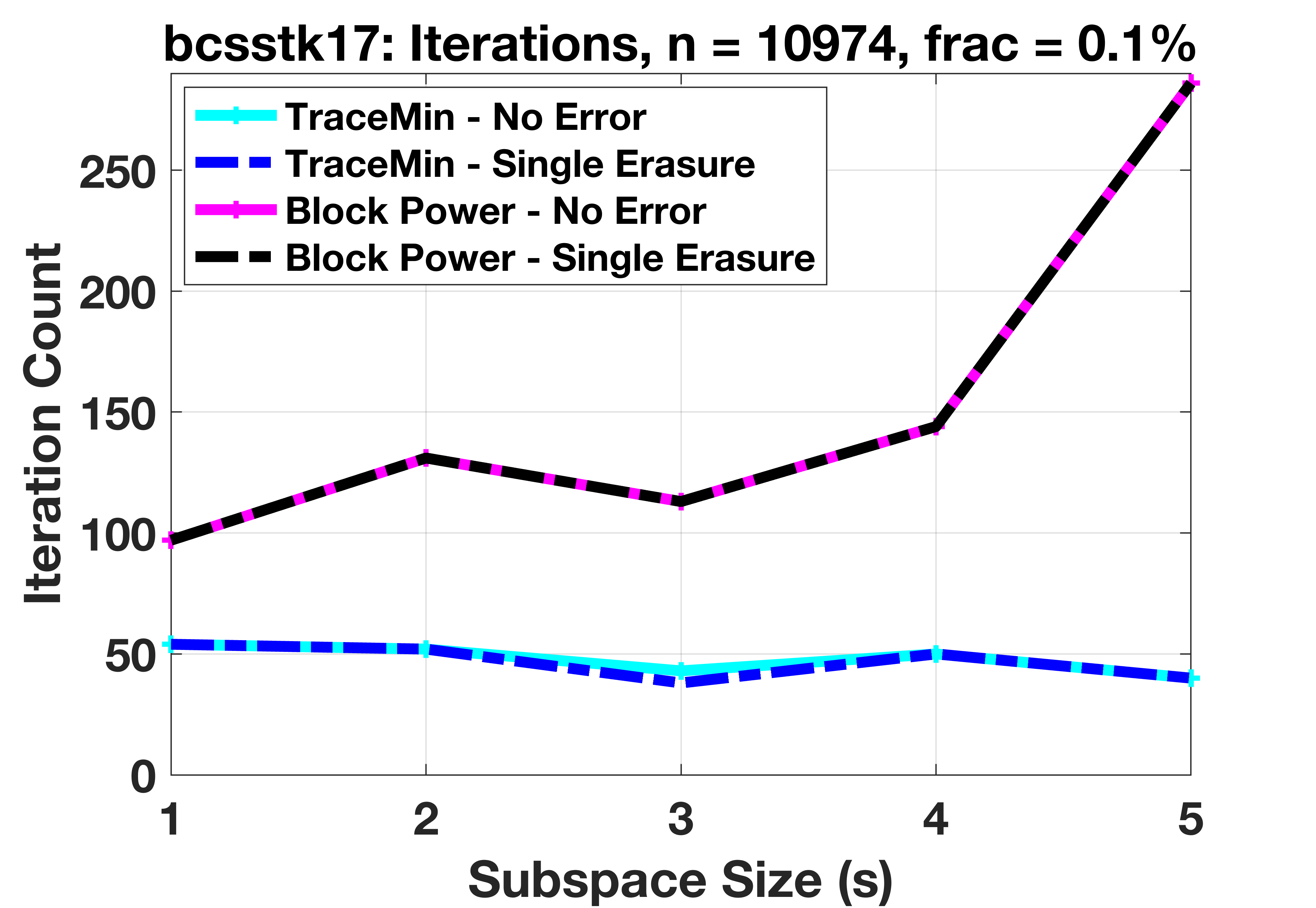}
  \caption{Iterations}
  \label{fig:power-s-iter-bcsstk17}
\end{subfigure}%
\begin{subfigure}{.49\textwidth}
  \centering
  \includegraphics[width=.98\linewidth]{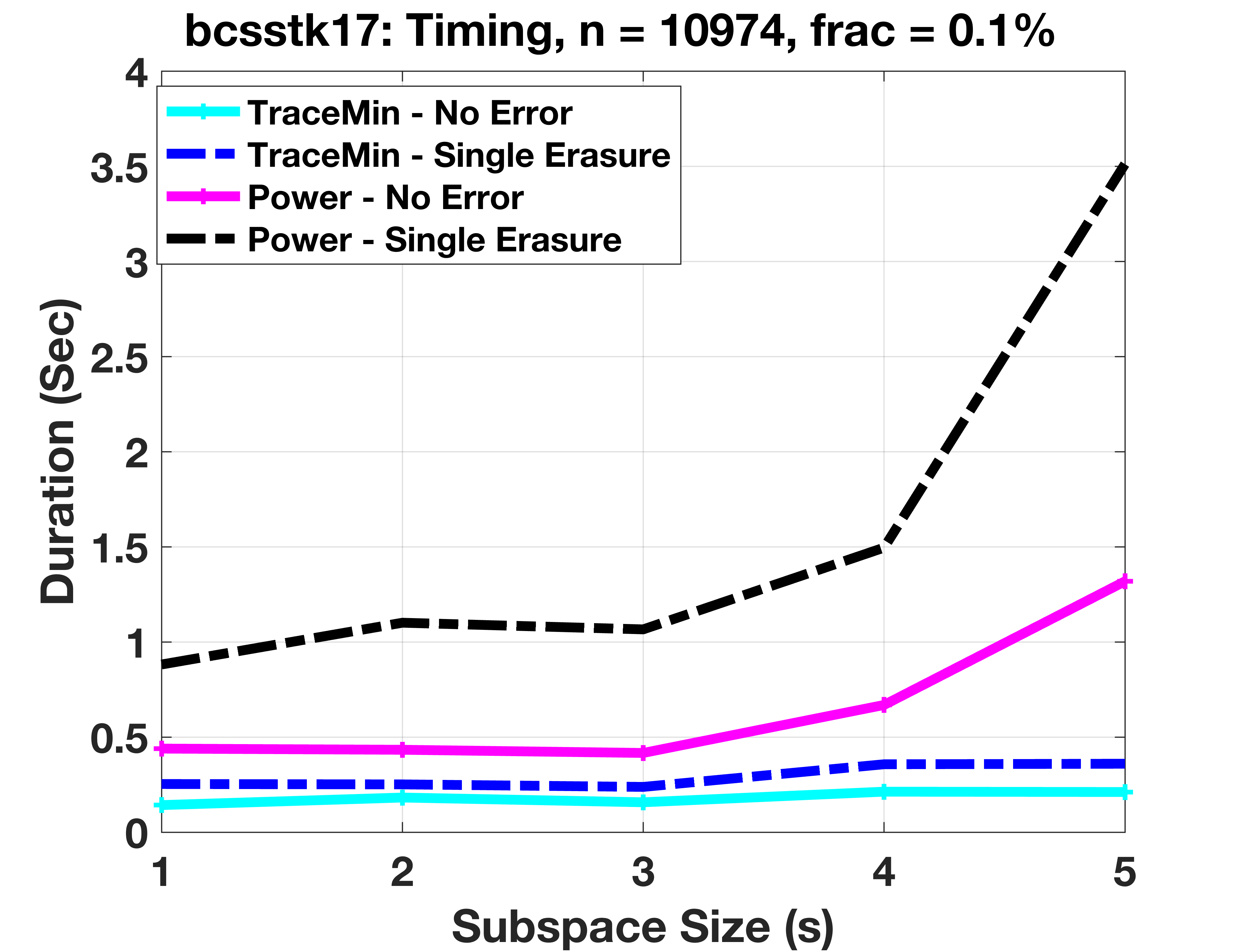}
  \caption{Timing}
  \label{fig:power-s-time-bcsstk17}
\end{subfigure}
\caption{Convergence for bcsstk17 Dataset for varying subspace size with 0.1\% Erased}
\label{fig:power-subspace-bcsstk17}
\end{figure}
\color{black}
Figure~\ref{fig:power-subspace-bcsstk17} illustrates that as the subspace size increases, the runtime of TraceMin remains relatively stable, whereas the Power Method becomes progressively more expensive due to the sequential computation of eigenvalues. Furthermore, we observe that the erasure-coded scenarios incur slightly higher runtimes than their no-erasure counterparts, primarily because erasure coding introduces additional fill-in arising from the added coding rows and columns.

\begin{figure}[H]
\centering
\begin{subfigure}{.49\textwidth}
  \centering
  \includegraphics[width=.98\linewidth]{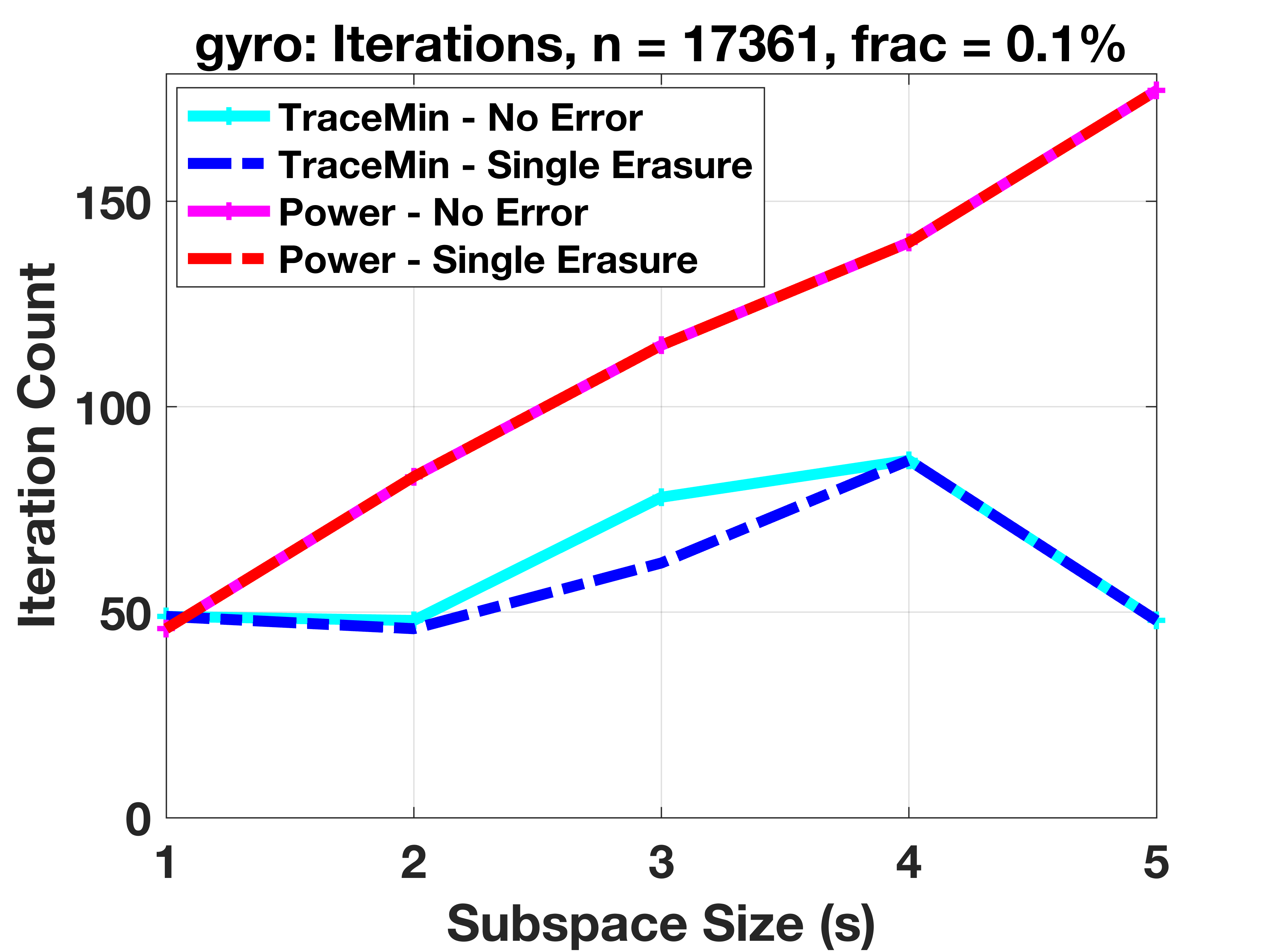}
  \caption{Iterations}
  \label{fig:power-s-iter-gyro}
\end{subfigure}%
\begin{subfigure}{.49\textwidth}
  \centering
  \includegraphics[width=.98\linewidth]{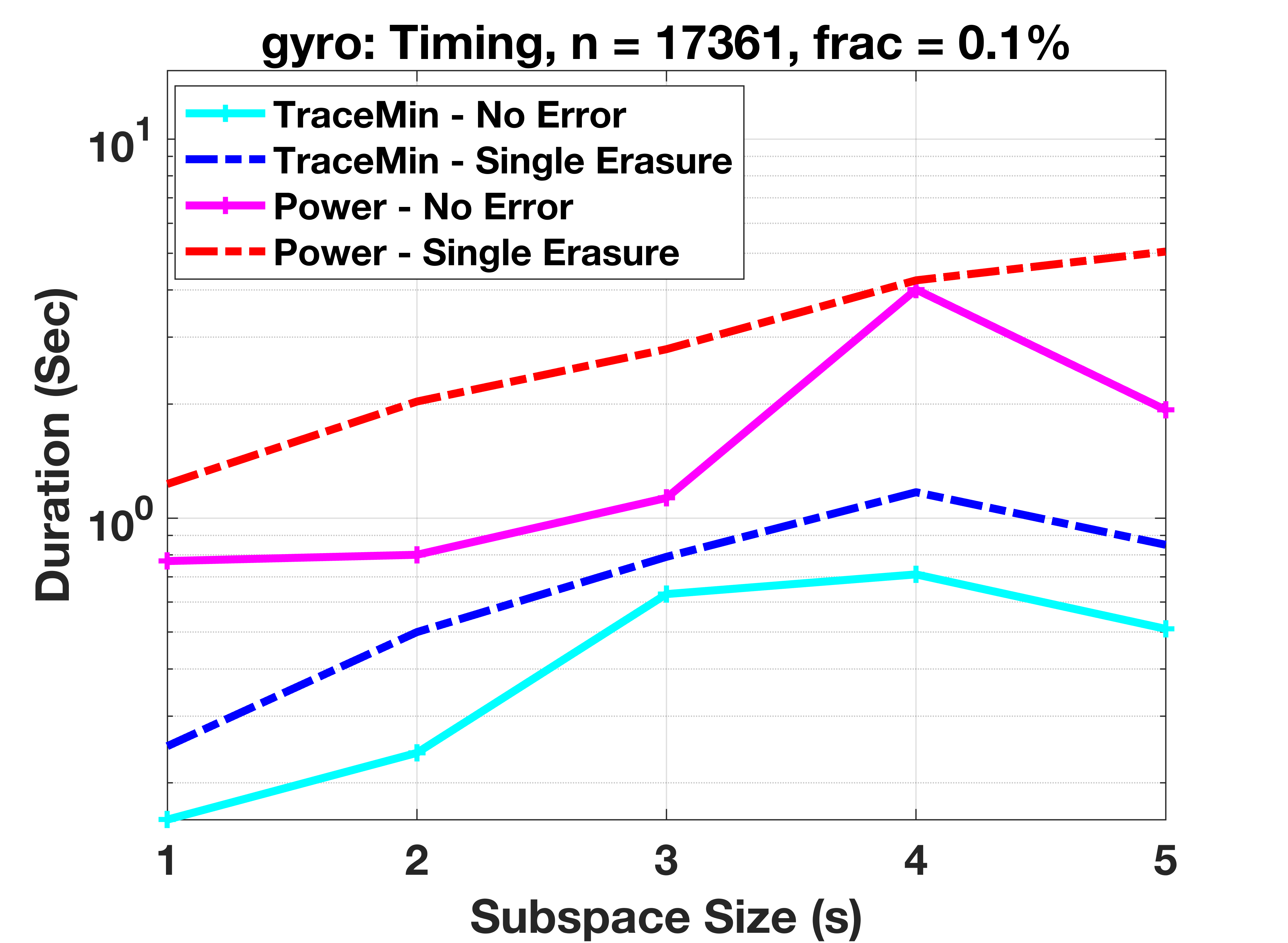}
  \caption{Timing}
  \label{fig:power-s-time-gyro}
\end{subfigure}
\caption{Convergence of Eigensolvers for gyro Dataset for varying subspace size}
\label{fig:power-subspace-gyro}
\end{figure}

Similarly, as shown in Figure~\ref{fig:power-s-iter-gyro}, the Power Method requires more iterations to converge on the \textbf{gyro} dataset when computing the dominant eigenvalues, resulting in significantly higher execution times compared to TraceMin (as illustrated in Figure~\ref{fig:power-s-time-gyro}). Since TraceMin consistently outperforms the Power Iteration method, the remainder of this paper focuses on analyzing the overheads introduced by erasure coding and comparing other relevant benchmark results with respect to TraceMin.

\subsection{Convergence Results}

We present here convergence results for covariance matrices computed from covariance matrix based on MNIST \cite{deng2012mnist} and CIFAR-10 \cite{Krizhevsky09learningmultiple} Train and Test datasets. 

\begin{figure}[H]
\centering
\begin{subfigure}{.49\textwidth}
  \centering
  \includegraphics[width=.98\linewidth]{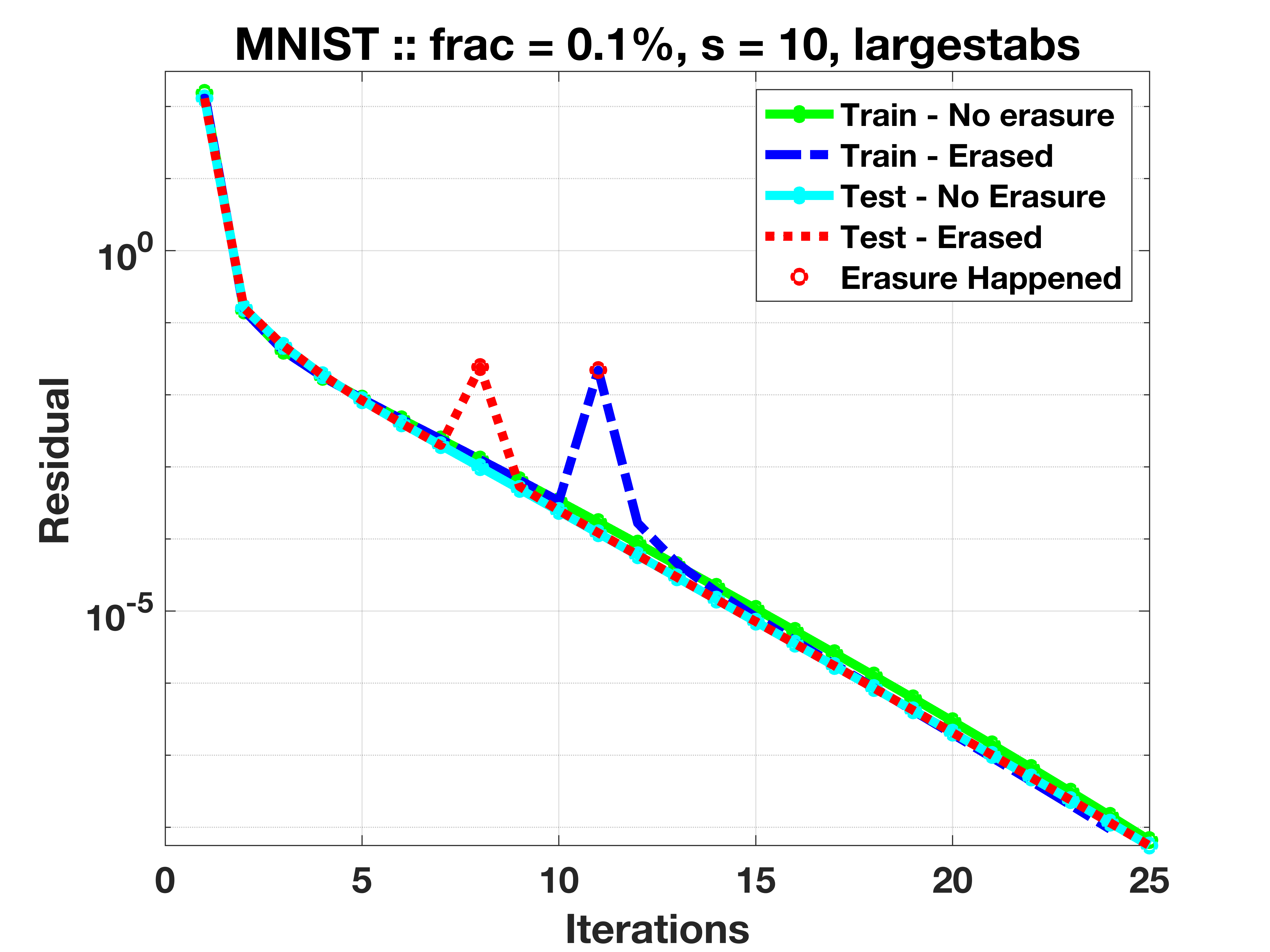}
  \caption{Residual}
  \label{fig:res-iter-mnist}
\end{subfigure}%
\begin{subfigure}{.49\textwidth}
  \centering
  \includegraphics[width=.98\linewidth]{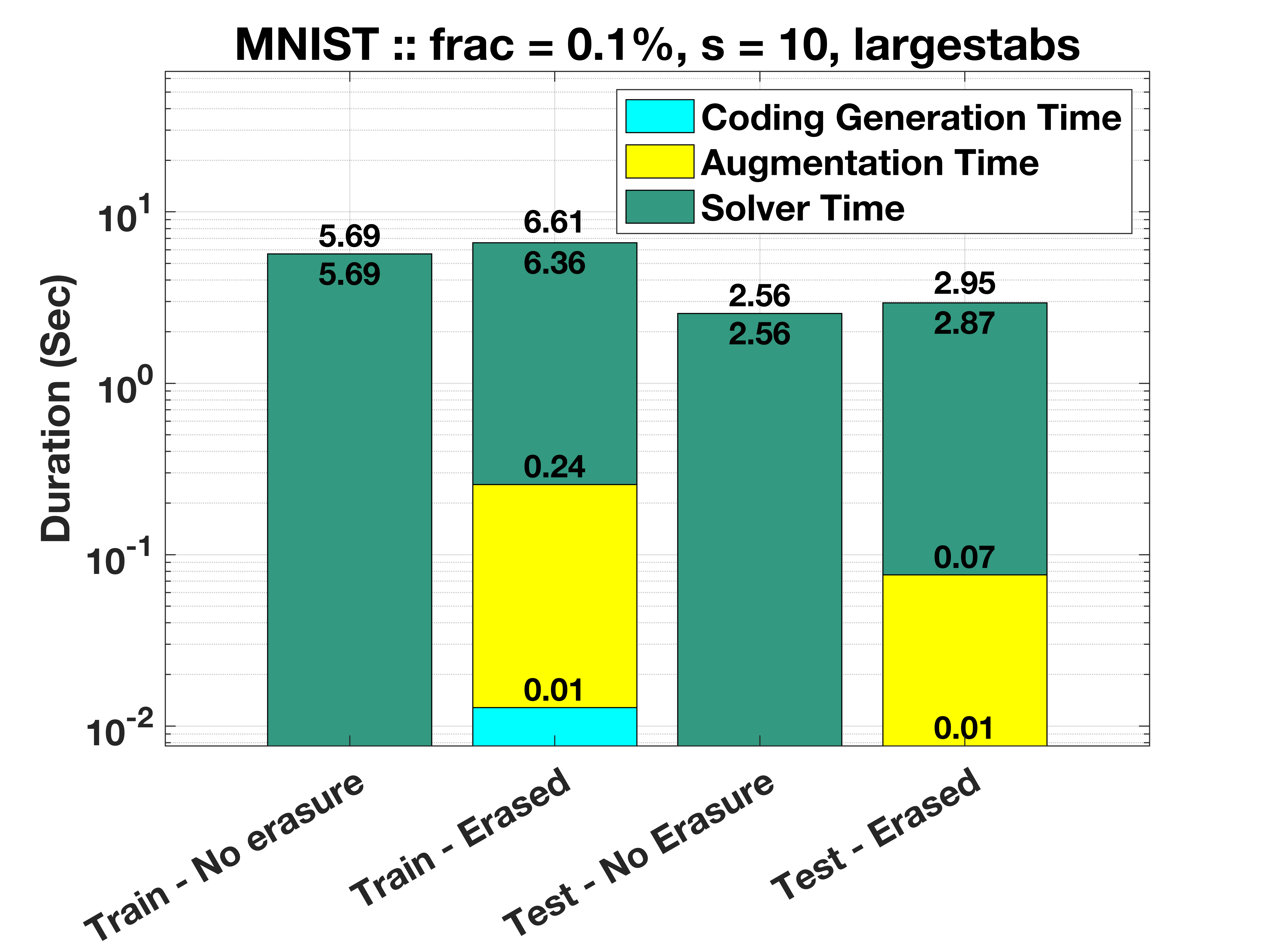}
  \caption{Timing Breakdown}
  \label{fig:res-time-mnist}
\end{subfigure}
\caption{Computing Largest Few Eigenvalue for MNIST Dataset with 0.1\% Erased}
\label{fig:res-mnist}
\end{figure}

Figure \ref{fig:res-iter-mnist} shows that the Erasure Coded TraceMin successfully recovers from faults and, within a few iterations, follows a residual pattern similar to its ``No Erasure'' counterparts. This indicates that Erasure-Coded TraceMin exhibits excellent convergence characteristics for MNIST Train and Test datasets. Furthermore, as shown in Figure \ref{fig:res-time-mnist}, Erasure Coding introduces minimal additional computational time compared to the ``No Erasure'' scenario, resulting in a significantly low overhead for the Erasure Coding scheme.

\begin{figure}[H]
\centering
\begin{subfigure}{.49\textwidth}
  \centering
  \includegraphics[width=.98\linewidth]{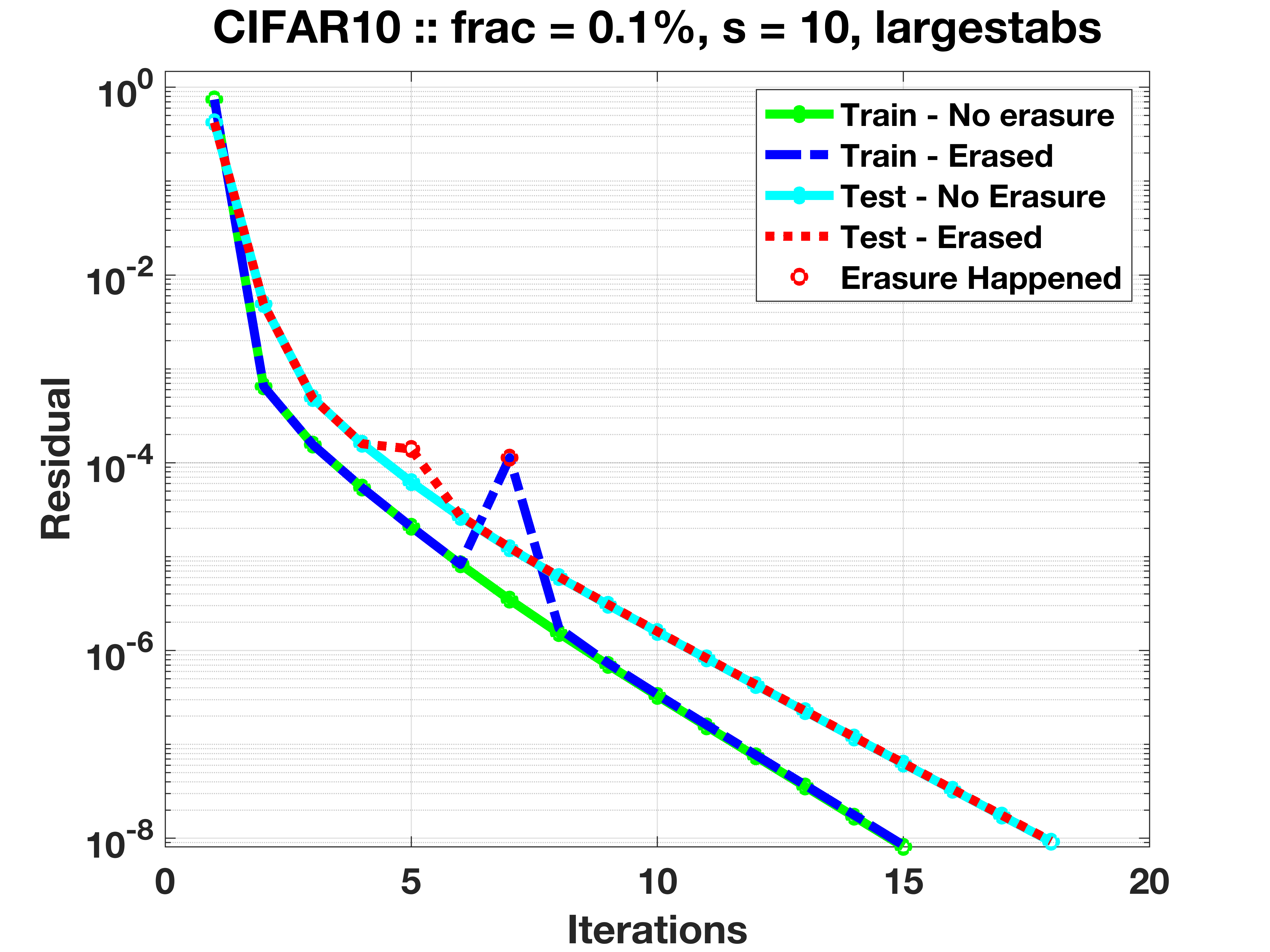}
  \caption{Residual}
  \label{fig:res-iter-cifar}
\end{subfigure}%
\begin{subfigure}{.49\textwidth}
  \centering
  \includegraphics[width=.98\linewidth]{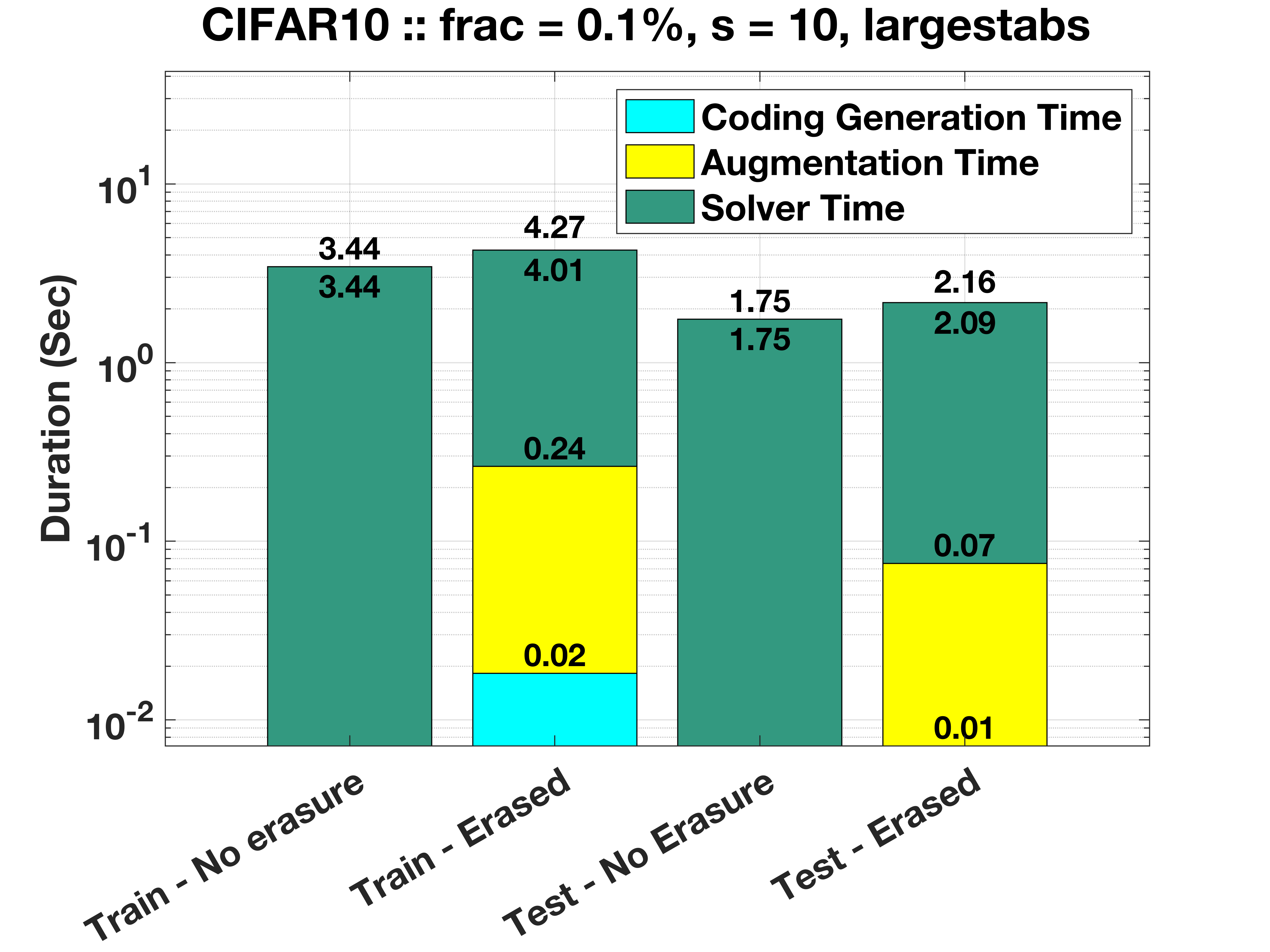}
  \caption{Timing Breakdown}
  \label{fig:res-time-cifar}
\end{subfigure}
\caption{Computing Largest Few Eigenvalue for CIFAR10 Dataset with 0.1\% Erased}
\label{fig:res-cifar}
\end{figure}

Figures \ref{fig:res-mnist} and \ref{fig:res-cifar} illustrate the TraceMin iteration errors for the MNIST and CIFAR-10 training and test datasets. We observe that while the residual increases in the event of an erasure, the Erasure-Coded TraceMin still converges efficiently without requiring significantly more iterations. This demonstrates the excellent convergence properties of the Erasure-Coded Eigensolver when computing the largest eigenvalues. Furthermore, Figure \ref{fig:res-time-cifar} highlights the minimal overhead introduced by Erasure-Coded TraceMin, making it a highly efficient and robust choice.

\begin{figure}[H]
\centering
\begin{subfigure}{.49\textwidth}
  \centering
  \includegraphics[width=.98\linewidth]{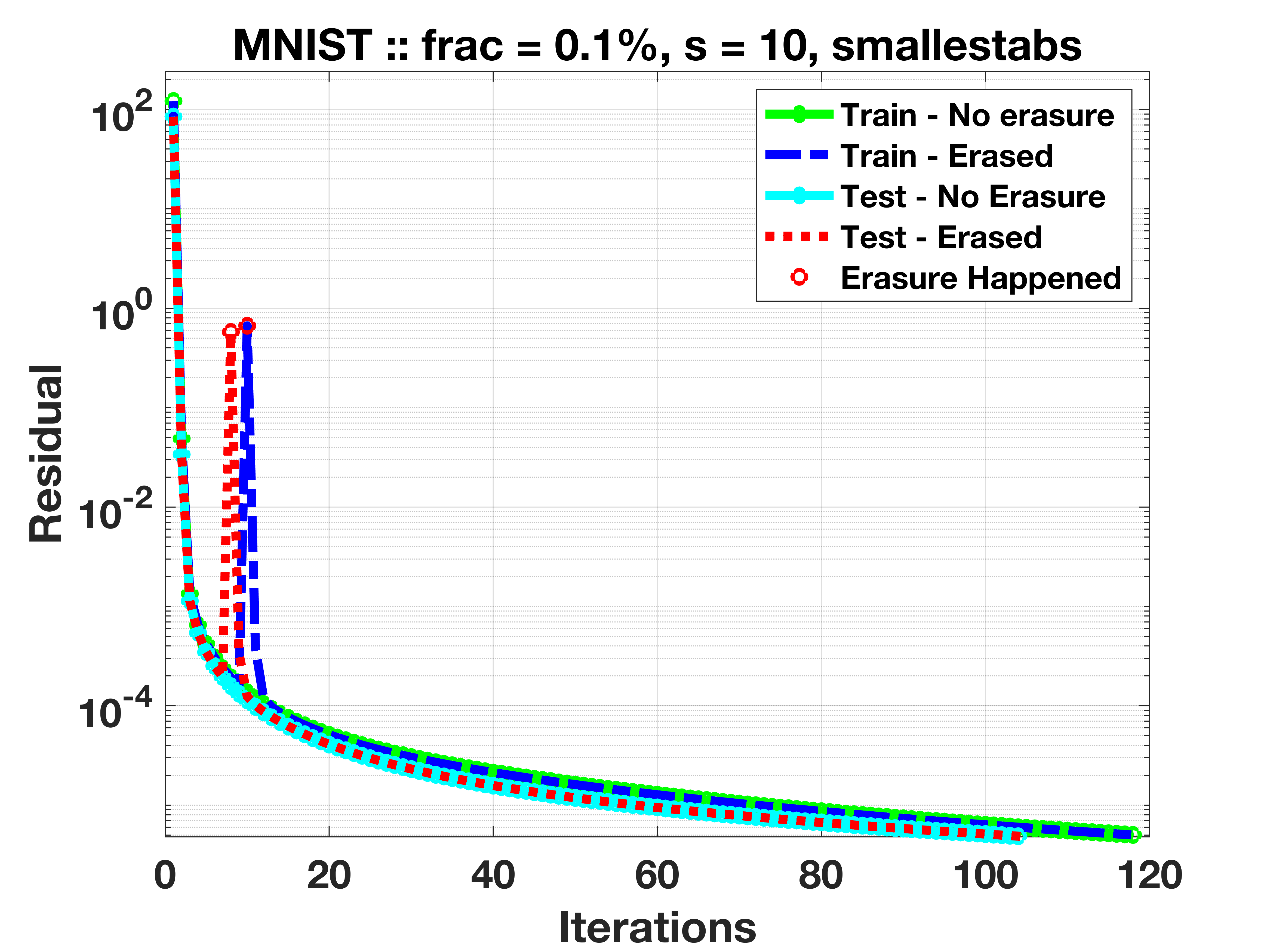}
  \caption{Residual}
  \label{fig:res-iter-mnist-small}
\end{subfigure}%
\begin{subfigure}{.49\textwidth}
  \centering
  \includegraphics[width=.98\linewidth]{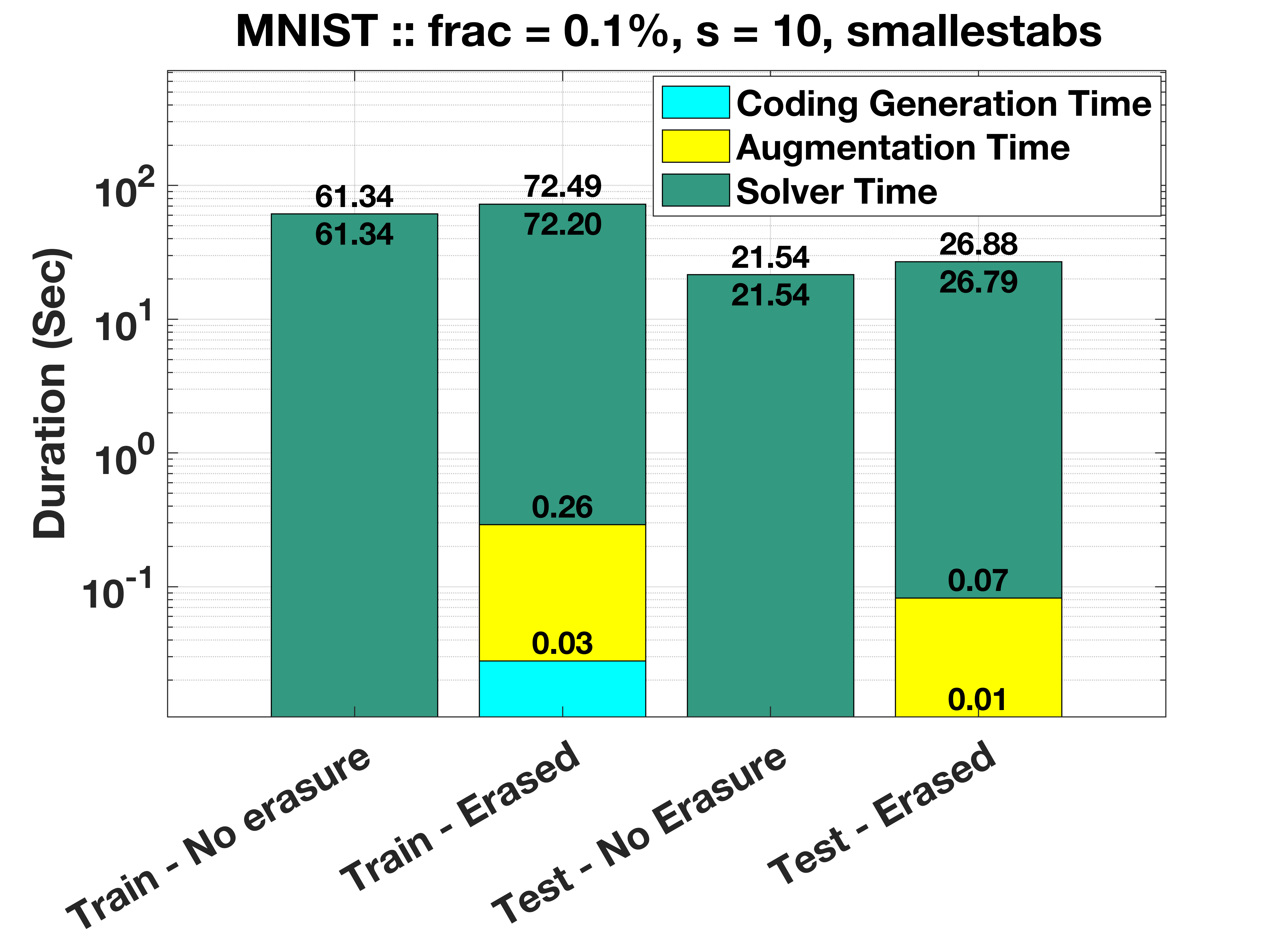}
  \caption{Timing Breakdown}
  \label{fig:res-time-mnist-small}
\end{subfigure}
\caption{Computing Smallest Few Eigenvalue for MNIST Dataset with 1\% Erased}
\label{fig:res-mnist-small}
\end{figure}

Figure \ref{fig:res-mnist-small} shows that at the event of failure, erasure-coded TraceMin converges to its solution without taking many more iterations making the erasure coding an obvious choice. Erasure-Coded TraceMin showing impeccable convergence when computing smallest eigenvalues as well as shown in Figure \ref{fig:res-mnist-small}.

We observe from Figure \ref{fig:res-mnist-small} that erasure-coded TraceMin recovers from 0.1\% erasures (150 row-column pairs) of MNIST Training Dataset to compute exact eigen-pairs (tested up to 15 largest eigen-pairs). The erasure-coded TraceMin takes less than 20\% additional iterations to converge in case of an erasure than the ``No Erasure'' scenario.

\subsection{Multifault}
We implement a Random Multifault model in which multiple faults can occur at any point during execution. These faults are simulated using a random number distribution, where a predefined set of iterations is selected to simulate fault occurrences, resulting in data erasure (loss).

\begin{figure}[H]
\centering
\begin{subfigure}{.49\textwidth}
  \centering
  \includegraphics[width=.98\linewidth]{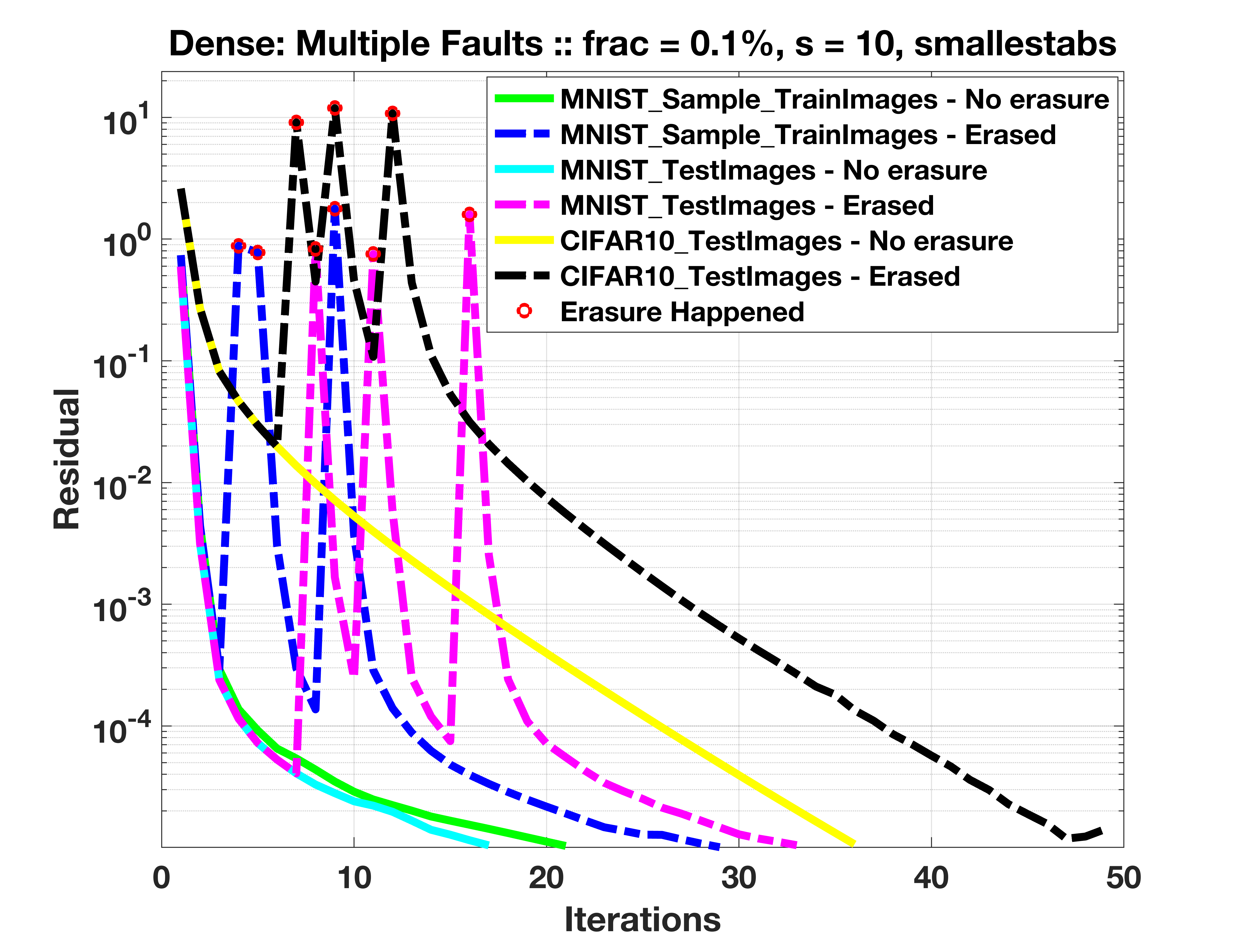}
  \caption{Dense Matrix}
  \label{fig:res-dense-multiple}
\end{subfigure}%
\begin{subfigure}{.49\textwidth}
  \centering
  \includegraphics[width=.98\linewidth]{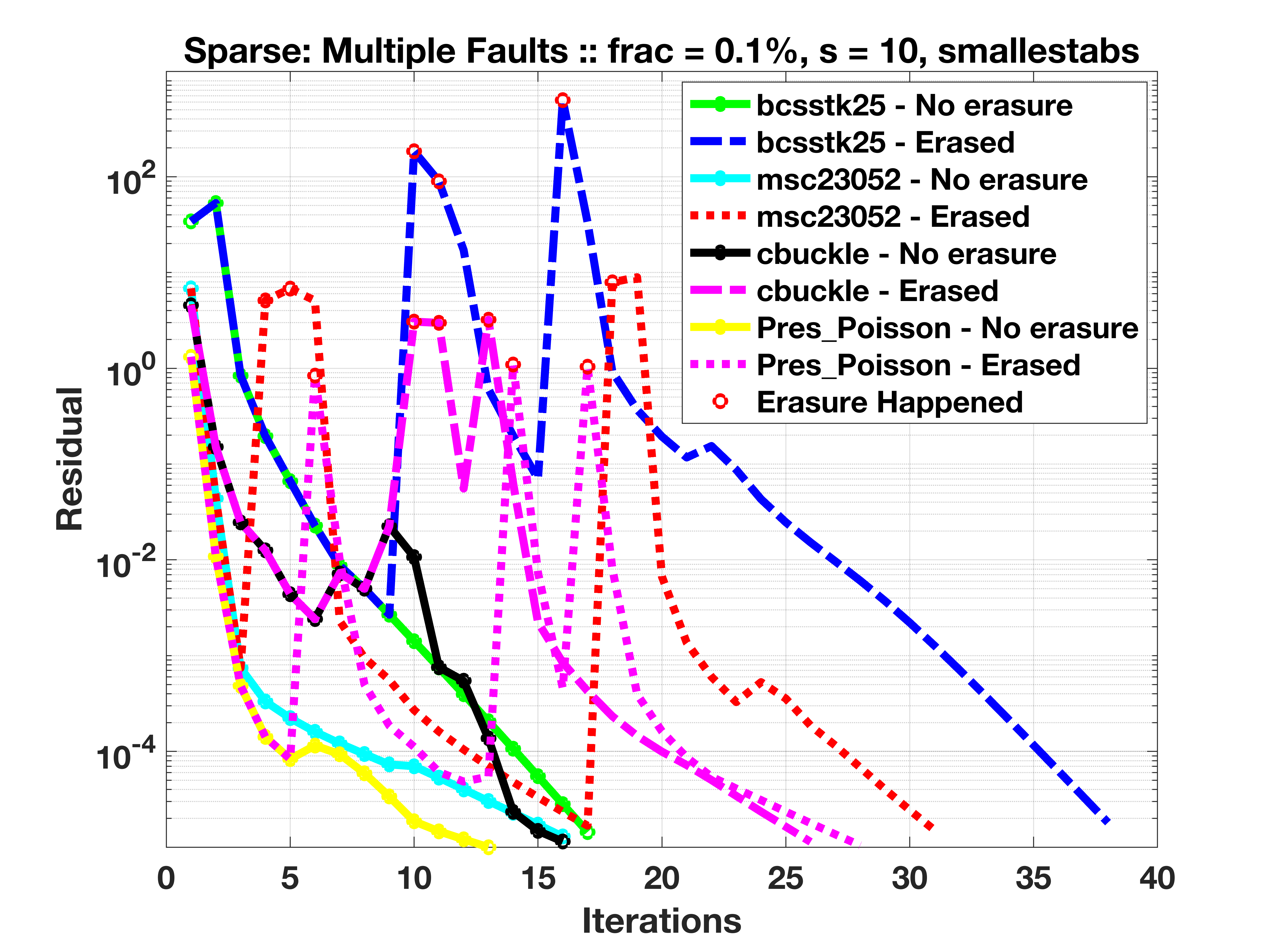}
  \caption{Sparse Matrix}
  \label{fig:res-sparse-multiple}
\end{subfigure}
\caption{Erasure Coded TraceMin in Multifault Scenario}
\label{fig:res-multiple}
\end{figure}

In Figure \ref{fig:res-multiple}, we observe that Erasure-Coded TraceMin successfully computes eigenvalues even in the presence of multiple failures, requiring only a modest increase in iterations. This excellent convergence behavior remains consistent across both sparse and dense datasets. To demonstrate the generalizability of our Erasure-Coded scheme, we select a representative set of dense and sparse datasets. Furthermore, Figure \ref{fig:res-dense-multiple} shows that Erasure-Coded TraceMin requires significantly fewer iterations than the product of the number of faults and the iterations needed by TraceMin in the ``No Erasure'' scenario. In other words, Erasure Coding imposes substantially lower overhead compared to restarting the solver in the event of multiple failures.

\section{Conclusion}
\label{sec:conclusions}

In this work, we introduce a novel erasure-coded, fault-tolerant eigenvalue solver, establishing its correctness and convergence properties. Through extensive evaluations, we demonstrate that our solver incurs minimal overhead across different fault types while maintaining excellent convergence behavior. Our approach exhibits strong robustness and performance across various fault rates and standard benchmark datasets, highlighting its effectiveness in real-world scenarios. By enabling reliable eigenvalue computations in faulty environments, our methods provide a crucial computational foundation for a wide range of scientific applications where resilience and efficiency are paramount.

\section*{Acknowledgments}
We would like to acknowledge NSF for supporting the research by grant.
DFG would like to acknowledge DOE DE-SC0023162 Sparsitute MMICC center for partial support as well as NSF Nonlinear graph IIS-2007481.

\bibliographystyle{siamplain}
\bibliography{references}

@article{Kruskal1977ThreewayAR,
  title={Three-way arrays: rank and uniqueness of trilinear decompositions, with application to arithmetic complexity and statistics},
  author={Joseph B. Kruskal},
  journal={Linear Algebra and its Applications},
  year={1977},
  volume={18},
  pages={95-138},
  url={https://api.semanticscholar.org/CorpusID:121354144}
}

@article{Zhu2014ErasureCF,
  title={Erasure Coding for Fault-Oblivious Linear System Solvers},
  author={Yao Zhu and David F. Gleich and Ananth Y. Grama},
  journal={SIAM J. Sci. Comput.},
  year={2014},
  volume={39},
  url={https://api.semanticscholar.org/CorpusID:16958418}
}

@article{BOSILCA2009410,
title = {Algorithm-based fault tolerance applied to high performance computing},
journal = {Journal of Parallel and Distributed Computing},
volume = {69},
number = {4},
pages = {410-416},
year = {2009},
issn = {0743-7315},
doi = {https://doi.org/10.1016/j.jpdc.2008.12.002},
url = {https://www.sciencedirect.com/science/article/pii/S0743731508002141},
author = {George Bosilca and Rémi Delmas and Jack Dongarra and Julien Langou},
keywords = {Fault tolerance, Linear algebra, High performance computing}
}

@article{Bridges2012FaulttolerantLS,
  title={Fault-tolerant linear solvers via selective reliability},
  author={Patrick G. Bridges and Kurt B. Ferreira and M. Heroux and Mark Hoemmen},
  journal={ArXiv},
  year={2012},
  volume={abs/1206.1390},
  url={https://api.semanticscholar.org/CorpusID:16963719}
}

@inproceedings{chen2009,
author = {Chen, Zizhong},
title = {Optimal Real Number Codes for Fault Tolerant Matrix Operations},
year = {2009},
isbn = {9781605587448},
publisher = {Association for Computing Machinery},
address = {New York, NY, USA},
url = {https://doi.org/10.1145/1654059.1654089},
doi = {10.1145/1654059.1654089},
booktitle = {Proceedings of the Conference on High Performance Computing Networking, Storage and Analysis},
articleno = {29},
numpages = {10},
location = {Portland, Oregon},
series = {SC '09}
}

@ARTICLE{chen_dongarra2008,
  author={Chen, Zizhong and Dongarra, Jack},
  journal={IEEE Transactions on Parallel and Distributed Systems}, 
  title={Algorithm-Based Fault Tolerance for Fail-Stop Failures}, 
  year={2008},
  volume={19},
  number={12},
  pages={1628-1641},
  doi={10.1109/TPDS.2008.58}
}

@ARTICLE{huang1984,
  author={Kuang-Hua Huang and Abraham, Jacob A.},
  journal={IEEE Transactions on Computers}, 
  title={Algorithm-Based Fault Tolerance for Matrix Operations}, 
  year={1984},
  volume={C-33},
  number={6},
  pages={518-528},
  doi={10.1109/TC.1984.1676475}
}

@article{LUK1988172,
title = {An analysis of algorithm-based fault tolerance techniques},
journal = {Journal of Parallel and Distributed Computing},
volume = {5},
number = {2},
pages = {172-184},
year = {1988},
issn = {0743-7315},
doi = {https://doi.org/10.1016/0743-7315(88)90027-5},
url = {https://www.sciencedirect.com/science/article/pii/0743731588900275},
author = {Franklin T. Luk and Haesun Park}
}

@article{deng2012mnist,
  title={The mnist database of handwritten digit images for machine learning research},
  author={Deng, Li},
  journal={IEEE Signal Processing Magazine},
  volume={29},
  number={6},
  pages={141--142},
  year={2012},
  publisher={IEEE}
}

@TECHREPORT{Krizhevsky09learningmultiple,
            author={Alex Krizhevsky},
            title={Learning multiple layers of features from tiny images},
            institution={Department of Computer Science, University of Toronto},
            year={2009}
}

@inproceedings{dongarra_check,
author = {Bland, Wesley and Du, Peng and Bouteiller, Aurelien and Herault, Thomas and Bosilca, George and Dongarra, Jack},
title = {A checkpoint-on-failure protocol for algorithm-based recovery in standard MPI},
year = {2012},
isbn = {9783642328190},
publisher = {Springer-Verlag},
address = {Berlin, Heidelberg},
url = {https://doi.org/10.1007/978-3-642-32820-6_48},
doi = {10.1007/978-3-642-32820-6_48},
booktitle = {Proceedings of the 18th International Conference on Parallel Processing},
pages = {477–488},
numpages = {12},
location = {Rhodes Island, Greece},
series = {Euro-Par'12}
}

@INPROCEEDINGS{activereplica_ipdps,
  author={Ropars, Thomas and Lefray, Arnaud and Kim, Dohyun and Schiper, André},
  booktitle={2015 IEEE International Parallel and Distributed Processing Symposium}, 
  title={Efficient Process Replication for MPI Applications: Sharing Work between Replicas}, 
  year={2015},
  volume={},
  number={},
  pages={645-654},
  keywords={Fault tolerance;Fault tolerant systems;Protocols;Checkpointing;Context;Computer crashes;Kernel;High performance computing;fault tolerance;replication},
  doi={10.1109/IPDPS.2015.29}}

@article{joyceDynamic,
author = {Kang, Xuejiao and Gleich, David F. and Sameh, Ahmed and Grama, Ananth},
title = {Adaptive Erasure Coded Fault Tolerant Linear System Solver},
year = {2021},
issue_date = {December 2021},
publisher = {Association for Computing Machinery},
address = {New York, NY, USA},
volume = {8},
number = {4},
issn = {2329-4949},
url = {https://doi.org/10.1145/3490557},
doi = {10.1145/3490557},
journal = {ACM Trans. Parallel Comput.},
month = {dec},
articleno = {21},
numpages = {19},
keywords = {adaptive fault tolerance, linear solver, Fault tolerance}
}

@inproceedings{Deterministic_replay07,
title = "Retrospect: Deterministic replay of MPI applications for interactive distributed debugging",
author = "Aurelien Bouteiller and George Bosilca and Jack Dongarra",
year = "2007",
doi = "10.1007/978-3-540-75416-9_41",
language = "English",
isbn = "9783540754152",
volume = "4757",
series = "Lecture Notes in Computer Science",
publisher = "Springer Nature",
pages = "297--306",
booktitle = "Lecture Notes in Computer Science (including subseries Lecture Notes in Artificial Intelligence and Lecture Notes in Bioinformatics)|Lect. Notes Comput. Sci.",
address = "United States",
note = "14th European PVM/MPI Users' Group Meeting on Parallel Virtual Machine and Message Passing Interface ; Conference date: 01-07-2007",
url = "http://dblp.uni-trier.de/db/conf/pvm/pvm2007.html#BouteillerBD07http://dblp.uni-trier.de/rec/bibtex/conf/pvm/BouteillerBD07.xmlhttp://dblp.uni-trier.de/rec/bibtex/conf/pvm/BouteillerBD07",
}

@article{zhugleich2017,
author = {Zhu, Yao and Gleich, David F. and Grama, Ananth},
title = {Erasure Coding for Fault-Oblivious Linear System Solvers},
journal = {SIAM Journal on Scientific Computing},
volume = {39},
number = {1},
pages = {C48-C64},
year = {2017},
doi = {10.1137/15M1041511},
URL = {
        https://doi.org/10.1137/15M1041511
},
eprint = {   
        https://doi.org/10.1137/15M1041511
}
}

@article{samehParallel,
title = {Parallel implementations of the trace minimization scheme TraceMIN for the sparse symmetric eigenvalue problem},
journal = {Computers \& Mathematics with Applications},
volume = {65},
number = {3},
pages = {460-468},
year = {2013},
note = {Efficient Numerical Methods for Scientific Applications},
issn = {0898-1221},
doi = {https://doi.org/10.1016/j.camwa.2012.06.011},
url = {https://www.sciencedirect.com/science/article/pii/S0898122112004415},
author = {A. Klinvex and F. Saied and A. Sameh},
keywords = {Trace minimization, Symmetric eigenvalue problem, Sparse matrices, Parallel computing},
abstract = {Eigenvalue problems arise in many computational science and engineering applications: in structural mechanics, nanoelectronics, and Google’s PageRank link analysis, for example. Often, the large size of these eigenvalue problems requires the development of eigensolvers that scale well on parallel computing platforms. In this paper, we compare the effectiveness and robustness of our eigensolver for the symmetric generalized eigenvalue problem, the trace minimization scheme TraceMIN–developed in the early 1980s–against today’s well-known sparse eigensolvers including: the LOBPCG and block Krylov–Schur implementations in Trilinos; ARPACK; and several methods in the PRIMME package such as the Jacobi–Davidson one. In addition, we demonstrate the parallel scalability of two variants of TraceMIN on multicore nodes as well as on large clusters of such nodes. Our results show that TraceMIN is more robust and has higher parallel scalability than the above-mentioned competing eigensolvers.}
}

@ARTICLE{2000sameh,
       author = {{Sameh}, Ahmed and {Tong}, Zhanye},
        title = "{The trace minimization method for the symmetric generalized eigenvalue problem}",
      journal = {Journal of Computational and Applied Mathematics},
     keywords = {Trace minimization, Jacobi-Davidson scheme, Eigenvalue, Eigenvector},
         year = 2000,
        month = nov,
       volume = {123},
       number = {1-2},
        pages = {155-175},
          doi = {10.1016/S0377-0427(00)00391-5},
       adsurl = {https://ui.adsabs.harvard.edu/abs/2000JCoAM.123..155S},
      adsnote = {Provided by the SAO/NASA Astrophysics Data System}
}

@ARTICLE{1982Sameh,
       author = {{Sameh}, Ahmed H. and {Wisniewski}, John A.},
        title = "{A Trace Minimization Algorithm for the Generalized Eigenvalue Problem}",
      journal = {SIAM Journal on Numerical Analysis},
         year = 1982,
        month = dec,
       volume = {19},
       number = {6},
        pages = {1243-1259},
          doi = {10.1137/0719089},
       adsurl = {https://ui.adsabs.harvard.edu/abs/1982SJNA...19.1243S},
      adsnote = {Provided by the SAO/NASA Astrophysics Data System}
}

@inproceedings{Fault_chen05,
author = {Chen, Zizhong and Fagg, Graham E. and Gabriel, Edgar and Langou, Julien and Angskun, Thara and Bosilca, George and Dongarra, Jack},
title = {Fault Tolerant High Performance Computing by a Coding Approach},
year = {2005},
isbn = {1595930809},
publisher = {Association for Computing Machinery},
address = {New York, NY, USA},
url = {https://doi.org/10.1145/1065944.1065973},
doi = {10.1145/1065944.1065973},
booktitle = {Proceedings of the Tenth ACM SIGPLAN Symposium on Principles and Practice of Parallel Programming},
pages = {213–223},
numpages = {11},
keywords = {floating-point arithmetic coding, high performance computing, message passing interface, fault tolerance},
location = {Chicago, IL, USA},
series = {PPoPP '05}
}

@INPROCEEDINGS{kang_linear_erasure,
  author={Kang, Xuejiao and Gleich, David F. and Sameh, Ahmed and Grama, Ananth},
  booktitle={2017 IEEE 37th International Conference on Distributed Computing Systems (ICDCS)}, 
  title={Distributed Fault Tolerant Linear System Solvers Based on Erasure Coding}, 
  year={2017},
  volume={},
  number={},
  pages={2478-2485},
  keywords={Sparse matrices;Encoding;Fault tolerance;Fault tolerant systems;Linear systems;Computational modeling;Program processors;Fault Tolerance;Erasure Coding;Linear System Solvers;Kruskal rank},
  doi={10.1109/ICDCS.2017.261}}

\end{document}